\DeclareSymbolFont{cyrletters}{OT2}{wncyr}{m}{n}
\DeclareMathSymbol{\Sha}{\mathalpha}{cyrletters}{"58}
\newcommand\undertilde[2][1]{%
 \def\useanchorwidth{T}%
  \ifnum#1>1%
    \stackunder[0pt]{\tenq[\numexpr#1-1\relax]{#2}}{\scriptscriptstyle\sim}%
  \else%
    \stackunder[1pt]{#2}{\scriptscriptstyle\sim}%
  \fi%
}
\DeclareMathAlphabet{\mathpzc}{OT1}{pzc}{m}{it}
\title[Notes on Ecalle's and Brown's solutions]
{Notes on Ecalle's and Brown's solutions to the double shuffle relations modulo products}
\keywords{Moulds}
\subjclass[2020]{Primary~16T05, Secondary~11M32}
\author{Hidekazu Furusho}
\author{Minoru Hirose}
\author{Nao Komiyama}
\address{Graduate School of Mathematics, Nagoya University,
Furo-cho, Chikusa-ku, Nagoya, 464-8602, Japan}
\email{furusho@math.nagoya-u.ac.jp}
\address{Faculty of Science
Kagoshima University, 1-21-35 Korimoto, Kagoshima,
890-0065, Japan}
\email{hirose@sci.kagoshima-u.ac.jp}
\address{Department of Mathematics, Graduate School of Science, Osaka University Toyonaka, Osaka 560-0043, Japan}
\email{komiyama.nao.aww@osaka-u.ac.jp}
\date{January 11, 2026}
\dedicatory{Dedicated to Professor Hiroaki Nakamura \\
on the occasion of his 60th birthday}
\newtheorem{thm}{Theorem}
\newtheorem{lem}[thm]{Lemma}
\newtheorem{prop}[thm]{Proposition}
\theoremstyle{definition} \newtheorem{rem}[thm]{Remark}}
\theoremstyle{definition} \newtheorem{defn}[thm]{Definition}}
\theoremstyle{definition} \newtheorem{thm-defn}[thm]{Theorem-Definition}}
\theoremstyle{definition}  \newtheorem{eg}[thm]{Example}}
\theoremstyle{definition}  }
\theoremstyle{remark} }
\numberwithin{equation}{section}
\newcommand{\Q}{\mathbb{Q}}
\newcommand{\Z}{\mathbb{Z}}
\newcommand{\N}{\mathbb{N}}
\newcommand{\shuffle}{\scalebox{.8}{$\Sha$}}
\newcommand{\vecx}{{\bf x}}
\newcommand{\nega}{\mathsf{neg}}
\newcommand{\swap}{\mathsf{swap}}
\newcommand{\ARI}{\mathsf{ARI}}
\newcommand{\BIMU}{{\rm BIMU}}
\newcommand{\ulflex}[2]{{#1}\rceil_{\scalebox{.7}{$#2$}}}
\newcommand{\urflex}[2]{{}_{\scalebox{.7}{$#1$}}\lceil{#2}}
\newcommand{\arit}{\mathsf{arit}}
\newcommand{\ari}{\mathsf{ari}}
\newcommand{\lu}{\mathsf{lu}}
\newcommand{\preari}{\mathsf{preari}}
\newcommand{\expari}{\mathsf{expari}}
\newcommand{\logari}{\mathsf{logari}}
\newcommand{\garit}{\mathsf{garit}}
\newcommand{\gari}{\mathsf{gari}}
\newcommand{\ganit}{{\sf ganit}}
\newcommand{\invgari}{{\sf invgari}}
\newcommand{\pal}{{\sf pal}}
\newcommand{\dupal}{{\sf dupal}}
\newcommand{\dur}{{\sf dur}}
\newcommand{\pic}{{\sf pic}}
\newcommand{\adari}{{\sf adari}}
\newcommand{\shmap}{\mathpzc{Sh}}
\newcommand{\unitmould}{1_{{\mathcal M}(\mathcal{F})}}
\newcommand{\unitdimould}{1_{\mathcal{M}_2(\mathcal F)}}
\newcommand{\pol}{\mathsf{pol}}
\newcommand{\al}{\mathsf{al}}
\newcommand{\il}{\mathsf{il}}
\newcommand{\id}{\mathsf{id}}
\newcommand{\GARI}{\mathsf{GARI}}
\newcommand{\as}{\mathsf{as}}
\newcommand{\paj}{\mathsf{paj}}
\newcommand{\Lau}{\mathsf{Lau}}
\newcommand{\ser}{\mathsf{ser}}
\newcommand{\sang}{\mathsf{sang}}
\newcommand{\slang}{\mathsf{slang}}
\newcommand{\sa}{\mathsf{sa}}
\newcommand{\mupaj}{\mathsf{mupaj}}
\newcommand{\luma}{\mathsf{luma}}
\newcommand{\leng}{\mathsf{leng}}
\begin{document}
\bibliographystyle{amsalpha+}
\maketitle

\begin{abstract}
We investigate relationships between polar/polynomial solutions to the double shuffle relations modulo products,
which were independently introduced by Brown and Ecalle.
\end{abstract}

{\small \tableofcontents}

\setcounter{section}{-1}
\section{Introduction}\label{introduction}
Multiple zeta values (MZVs) satisfy two fundamental families of algebraic relations,
the shuffle and stuffle (or harmonic) relations.
Taken together, these give rise to the so-called double shuffle relations,
which play a central role in the algebraic study of MZVs. 
Understanding the structure of the space of solutions to these relations, modulo products, has been a topic of interest in the study of MZV's.

In this context, Brown introduced
explicit polar and polynomial solutions $\psi_{2n+1}$ ($n\geq 1$) and $\psi_{-1}$,
in \cite{B-anatomy}, as well as the polynomial
solutions  $\sigma^{c}_{2n+1}$ 
in  \cite{B17b},
to the double shuffle relations modulo products, built in the framework of motivic MZVs.
Prior to Brown's work,  Ecalle \cite{E-flex}
had already developed a sophisticated  construction
of  polynomial solution  $\luma_{2n+1}$ within his theory of moulds.
Although both frameworks encode closely related algebraic structures, their precise correspondence has not yet been completely understood.
In this paper, we provide a reinterpretation of Brown's polar solutions
$\psi_{2n+1}$ and $\psi_{-1}$ in terms of Ecalle's mould theory in \S \ref{sec: On polar solutions} (Theorems \ref{thm 2n+1} and \ref{thm -1}).
In \S \ref{sec: On polynomial solutions} we give  a mould-theoretic interpretation of Brown's solution polynomial $\sigma^{c}_{2n+1}$ and compare it with $\luma_{2n+1}$
up to length (depth) 3 (Theorem \ref{thm: comparison depth 3}).

We remind that
certain maps for constructing solutions to the double shuffle relations modulo products have been studied independently
by Ecalle \cite{E-flex} and Brown \cite{B-anatomy}.
A comparison of these two maps is provided in \cite{MT}.

\section{Preparation}\label{Preparation}
This section reviews the minimal background on mould theory needed to formulate and explain the main results of this paper.
To help the reader become familiar with the techniques of mould theory
and to facilitate later explicit computations, examples of several basic notions are presented up to depth two or three.


\subsection{Basic mould operations}
In this subsection, basic mould operations, $\ari$, $\gari$ and $\adari$,
are recalled (consult \cite{E-flex} and \cite{S-ARIGARI} for details). 

The notion of moulds are introduced by Ecalle in \cite{E81},
but here a slightly different formulation will be used.
Let $\mathcal F:=\cup_m \mathcal F_m$ be a family of functions (cf. \cite{FHK}).
In this paper, it suffices to restrict to the following three cases: 
the polynomial case
$\mathcal F_\pol$ with ${\mathcal F}_{\pol,m}=\Q[x_1,..,x_m]$,
the series case
$\mathcal F_\ser$ with ${\mathcal F}_{\ser,m}=\Q[[x_1,..,x_m]]$
and the Laurent-series case
$\mathcal F_\Lau$ with
${\mathcal F}_{\Lau,m}=\Q((x_1,..,x_m))$ which is the fraction filed of
$\Q[[x_1,..,x_m]]$.

\begin{defn}
A {\it mould} $M$ with values in  $\mathcal F=\cup_m \mathcal F_m$
means a collection
$M=(M^m(x_1,..,x_m))_{m\geq 0}$ with
$M^m(x_1,..,x_m)\in\mathcal F_m$ for each $m\geq 0$
(cf. \cite{S-ARIGARI}).
The set $\mathcal M(\mathcal F)$ of moulds with values in $\mathcal F$
forms a $\Q$-algebra under the product
$$
M\times N=\left(\sum_{k=0}^m M^k(x_1,..,x_k)N^{m-k}(x_{k+1},..,x_m)
\right)_{m\ge0}
$$
for $M=(M^m(x_1,..,x_m))_{m\geq 0}$ and $N=(N^m(x_1,..,x_m))_{m\geq 0}$.
Denote the unit of $(\mathcal M(\mathcal F), \times )$ as $1_{\mathcal M(\mathcal F)}$ defined by
\begin{equation*}
\unitmould^{m}(x_1, \dots, x_m)
:=\left\{
\begin{array}{ll}
	1 & (m=0), \\
	0 & (\mbox{otherwise}).
\end{array}\right.
\end{equation*}
\end{defn}

The subset $\ARI(\mathcal F)$ which consists of moulds $M$ with $M^0(\emptyset)=0$ forms a subalgebra of $\mathcal M(\mathcal F)$ and
also forms a Lie algebra under the bracket
$\lu(M,N)=M\times N- N\times M$.
The subset $\GARI(\mathcal F)$ of $\mathcal M(\mathcal F)$ which consists of moulds $M$ with $M^0(\emptyset)=1$
forms a group under the product.

To explain that
$\ARI(\mathcal F)$ (resp. $\GARI(\mathcal F)$) carries another Lie bracket $\ari$ (resp. a product $\gari$),
we recall Ecalle's notion of flexions (cf. \cite{E-flex}).
Here we employ the reformulation given in \cite[Definition 1.14]{FK}:
Put $X:=\{ x_i \}_{i\ge1}$.
Let $X_\Z$ be the free $\Z$ module generated by $X$, that is, $X_\Z$ is defined by
$$
X_\Z:=\{ a_1x_1+\cdots+a_kx_k \ |\ k\ge1,\ a_j\in\Z \},
$$
and let $X_\Z^\bullet$ be the non-commutative free monoid generated by all elements of $X_\Z$ with the empty word $\emptyset$ as the unit.
For $\omega=(u_1,\dots,u_r)\in X_\Z^\bullet$ with $u_1,\dots,u_r\in X_\Z$, We call $r$ {\it the length of $\omega$} and denote it by $l(\omega)$.
We put  $\vecx_m:=(x_1,\dots,x_m)\in X_\Z^\bullet$.

\begin{defn}
\label{def:flexion}
The {\it flexions} are two binary operators $\urflex{*}{*},\ \ulflex{*}{*}:X_{\Z}^\bullet\times X_{\Z}^\bullet\rightarrow X_{\Z}^\bullet$
 which are defined by
\begin{align*}
	\urflex{\beta}{\alpha}
	&:= (b_1+\cdots+b_n+a_1, a_2, \dots, a_m), \\
	\ulflex{\alpha}{\beta}
	&:=(a_1, \dots, a_{m-1}, a_m+b_1+\cdots+b_n), \\
	\urflex{\emptyset}{\gamma}&:=\ulflex{\gamma}{\emptyset}:=\gamma ,
	\qquad \urflex{\gamma}{\emptyset}:=\ulflex{\emptyset}{\gamma}:=\emptyset ,
\end{align*}
for $\alpha=(a_1,\dots,a_m)$, $\beta=(b_1,\dots,b_n)\in X_{\Z}^\bullet$ ($m,n\geq1$) and $\gamma\in X_{\Z}^\bullet$.
\end{defn}

The following map $\arit$ is required to endow $\ARI(\mathcal F)$  a pre-Lie algebra  structure.

\begin{defn}
For $N\in {\mathcal M}(\mathcal F)$,
the map
$$
\arit(N): \mathcal M(\mathcal F)\to \mathcal M(\mathcal F)
$$
is defined as follows:
for $N\in {\mathcal M}(\mathcal F)$,
\begin{align*}
&(\arit(N)(M))^0(\vecx_0):=(\arit(N)(M))^1(\vecx_1):=0,
\intertext{and for $m\ge 2$,}
&(\arit(N)(M))^m(\vecx_m)
:=\sum_{\substack{\vecx_m=\alpha\beta\gamma \\ \beta,\gamma\neq\emptyset}}
M(\alpha\urflex{\beta}{\gamma})N(\beta)
-\sum_{\substack{\vecx_m=\alpha\beta\gamma \\ \alpha,\beta\neq\emptyset}}
M(\ulflex{\alpha}{\beta}\gamma)N(\beta).
\end{align*}
\end{defn}

\begin{eg}
For $M,N\in {\mathcal M}(\mathcal F)$, we have
\begin{align*}
&(\arit(N)(M))^0(\emptyset)=(\arit(N)(M))^1(x_1)=0, \\
&(\arit(N)(M))^2(x_1,x_2)=M^1(x_1+x_2)\left\{ N^1(x_1) - N^1(x_2) \right\}, \\
&(\arit(N)(M))^3(x_1,x_2,x_3)
=M^2(x_1,x_2+x_3)\left\{ N^1(x_2) - N^1(x_3) \right\} \\
&\quad+ M^2(x_1+x_2,x_3)\left\{ N^1(x_1) - N^1(x_2) \right\}
+ M^1(x_1+x_2+x_3)\left\{ N^2(x_1,x_2) - N^2(x_2,x_3) \right\}.
\end{align*}
\end{eg}

\begin{defn}
The map
$$
\preari:\mathcal M(\mathcal F)\times \mathcal M(\mathcal F)\to \mathcal M(\mathcal F)
$$
is defined as follows:
For $M,N\in {\mathcal M}(\mathcal F)$,
$$
\preari(M,N):=\arit(N)(M)+M\times N,
$$
and for $n\geq0$ and $A\in \ARI(\mathcal F)$,
$$
\preari_n(A):=\left\{\begin{array}{ll}
\unitmould& (n=0), \\
A& (n=1), \\
\preari(\preari_{n-1}(A),A)&  (n\geq2).
\end{array}
\right.
$$
\end{defn}

\begin{eg}
For $A\in \ARI(\mathcal F)$, we have
\begin{align*}
&(\preari_2(A))^0(\emptyset)=(\preari_2(A))^1(x_1)=0, \\
&(\preari_2(A))^2(x_1,x_2)=A^1(x_1+x_2)\left\{ A^1(x_1) - A^1(x_2) \right\} + A^1(x_1)A^1(x_2), \\
&(\preari_2(A))^3(x_1,x_2,x_3)
=A^2(x_1,x_2+x_3)\left\{ A^1(x_2) - A^1(x_3) \right\} \\
&\quad+ A^2(x_1+x_2,x_3)\left\{ A^1(x_1) - A^1(x_2) \right\}
+ A^1(x_1+x_2+x_3)\left\{ A^2(x_1,x_2) - A^2(x_2,x_3) \right\} \\
&\qquad\quad +A^1(x_1)A^2(x_2,x_3) +A^2(x_1,x_2)A^1(x_3),
\end{align*}
and have
\begin{align*}
&(\preari_3(A))^0(\emptyset)=(\preari_3(A))^1(x_1)=(\preari_3(A))^2(x_1,x_2)=0, \\
&(\preari_3(A))^3(x_1,x_2,x_3) \\
&=\left[ A^1(x_1+x_2+x_3)\left\{ A^1(x_1) - A^1(x_2+x_3) \right\} + A^1(x_1)A^1(x_2+x_3) \right]\left\{ A^1(x_2)-A^1(x_3) \right\}
\\&\quad
+ \left[ A^1(x_1+x_2+x_3)\left\{ A^1(x_1+x_2) - A^1(x_3) \right\} + A^1(x_1+x_2)A^1(x_3) \right]\left\{ A^1(x_1)-A^1(x_2) \right\} \\
&\qquad + \left[ A^1(x_1+x_2)\left\{ A^1(x_1) - A^1(x_2) \right\} + A^1(x_1)A^1(x_2) \right]A^1(x_3).
\end{align*}
\end{eg}

It is shown in \cite[Proposition A.6]{FK} with a complete proof
that $\ARI(\mathcal F)$ forms a pre-Lie algebra under  $\preari$,
and whence $\ARI(\mathcal F)$ forms a Lie algebra under the following  $\ari$-bracket.

\begin{defn}
The map
$$
\ari:\ARI(\mathcal F)\times \ARI(\mathcal F)\to \ARI(\mathcal F)
$$
is defined by
$$
\ari(M,N)=\preari(M,N)-\preari(N,M)
$$
for $M,N\in {\mathcal M}(\mathcal F)$.
\end{defn}

\begin{eg}
For $M,N\in {\mathcal M}(\mathcal F)$, we have
\begin{align*}
&\ari(M,N)^0(\emptyset)=\ari(M,N)^1(\emptyset)(x_1)=0, \\
&\ari(M,N)^2(x_1,x_2)=M^1(x_1+x_2)\left\{ N^1(x_1) - N^1(x_2) \right\}
-N^1(x_1+x_2)\left\{ M^1(x_1) - M^1(x_2) \right\}, \\
&\ari(M,N)^3(x_1,x_2,x_3)
=M^2(x_1,x_2+x_3)\left\{ N^1(x_2) - N^1(x_3) \right\} \\
&
-N^2(x_1,x_2+x_3)\left\{ M^1(x_2) - M^1(x_3) \right\}
\\
&+ M^2(x_1+x_2,x_3)\left\{ N^1(x_1) - N^1(x_2) \right\}
-N^2(x_1+x_2,x_3)\left\{ M^1(x_1) - M^1(x_2) \right\}
\\
&+ M^1(x_1+x_2+x_3)\left\{ N^2(x_1,x_2) - N^2(x_2,x_3) \right\}
-N^1(x_1+x_2+x_3)\left\{ M^2(x_1,x_2) - M^2(x_2,x_3) \right\}.
\end{align*}
\end{eg}

Next we equip $\GARI(\mathcal F)$ with an operation $\gari$.

\begin{defn}
For $T\in \GARI(\mathcal F)$,
the map
$$
\garit(T):\GARI(\mathcal F)\to\GARI(\mathcal F)
$$
is defined by,  for  $S\in \GARI(\mathcal F)$,
\begin{align*}
&(\garit(T)(S))^0(\vecx_0):=1
\intertext{and for $m\ge 1$,}
&(\garit(T)(S))^m(\vecx_m) \\
&:=\sum_{s\geq1}\sum_{\substack{\vecx_m=a_1b_1c_1\cdots a_sb_sc_s \\ b_i,\,c_ia_{i+1}\neq\emptyset}}
S(\urflex{a_1}{\ulflex{b_1}{c_1}}\cdots \urflex{a_s}{\ulflex{b_s}{c_s}})
T(a_1)\cdots T(a_s) T^{\times-1}(c_1)\cdots T^{\times-1}(c_s),
\end{align*}
where $T^{\times-1}$ is the inverse element of $T$ in the group $(\GARI(\mathcal F),\times)$.
\end{defn}
\begin{eg}
For $S,T\in \GARI(\mathcal F)$, we have
\begin{align*}
&(\garit(T)(S))^1(x_1)=S^1(x_1), \\
&(\garit(T)(S))^2(x_1,x_2)=S^2(x_1,x_2) + S^1(x_1+x_2)\left\{ T^1(x_1) - T^1(x_2) \right\}, \\
&(\garit(T)(S))^3(x_1,x_2,x_3)
=S^3(x_1,x_2,x_3) \\
&\quad + S^2(x_1,x_2+x_3)\left\{ T^1(x_2) - T^1(x_3) \right\} + S^2(x_1+x_2,x_3)\left\{ T^1(x_1) - T^1(x_2) \right\} \\
&\qquad + S^1(x_1+x_2+x_3)\left\{ T^2(x_1,x_2) - T^2(x_2,x_3) - T^1(x_1)T^1(x_3) \right\}.
\end{align*}
\end{eg}

{It is shown  in \cite[Proposition 39]{FHK} that $\GARI(\mathcal F)$ forms a group under the following $\gari$-product. }

\begin{defn}
The map
$$
\gari:\GARI(\mathcal F)\times \GARI(\mathcal F)\to \GARI(\mathcal F)
$$
is defined by,
for $S,T\in \GARI(\mathcal F)$,
$$
\gari(S,T):=\garit(T)(S)\times T.
$$
\end{defn}
\begin{eg}
For $S,T\in \GARI(\mathcal F)$, we have
\begin{align*}
&(\gari(S,T))^0(x_1)=1, \\
&(\gari(S,T))^1(x_1)=S^1(x_1) + T^1(x_1), \\
&(\gari(S,T))^2(x_1,x_2)=S^2(x_1,x_2) + S^1(x_1+x_2)\left\{ T^1(x_1) - T^1(x_2) \right\} \\
&\hspace{3.3cm}+ S^1(x_1)T^1(x_2) + T^2(x_1,x_2), \\
&(\gari(S,T))^3(x_1,x_2,x_3)
=S^3(x_1,x_2,x_3) \\
&\quad+ S^2(x_1,x_2+x_3)\left\{ T^1(x_2) - T^1(x_3) \right\} + S^2(x_1+x_2,x_3)\left\{ T^1(x_1) - T^1(x_2) \right\} \\
&\qquad + S^1(x_1+x_2+x_3)\left\{ T^2(x_1,x_2) - T^2(x_2,x_3) + T^1(x_2)T^1(x_3) - T^1(x_1)T^1(x_3) \right\} \\
&\qquad\quad + \left[ S^2(x_1,x_2) + S^1(x_1+x_2)\left\{ T^1(x_1) - T^1(x_2) \right\} \right]T^1(x_3) \\
&\qquad\qquad + S^1(x_1)T^2(x_2,x_3) + T^3(x_1,x_2,x_3).
\end{align*}
\end{eg}


The Lie algebra $\ARI(\mathcal F)$ and the group $\GARI(\mathcal F)$
are related  via the following exponential map.

\begin{defn}
The map
$$\expari:\ARI(\mathcal F) \rightarrow \GARI(\mathcal F)$$
is defined by,
for $A\in \ARI(\mathcal F)$,
\begin{align*}
\expari(A)
&:=\sum_{n\geq0}\frac{1}{n!}\preari_n(A) \\
&=\unitmould+A+\frac{1}{2}\preari_2(A)+\frac{1}{6}\preari_3(A) +\cdots, \\
&=\unitmould+A+\frac{1}{2}\preari(A,A)+\frac{1}{6}\preari(\preari(A,A),A) +\cdots .
\end{align*}
\end{defn}
\begin{eg}
For $A\in \ARI(\mathcal F)$, we have
\begin{align*}
&(\expari(A))^0(\emptyset)=1, \\ 
&(\expari(A))^1(x_1)=A^1(x_1), \\
&(\expari(A))^2(x_1,x_2)
=A^2(x_1,x_2) +\frac{1}{2}\left[ A^1(x_1+x_2)\left\{ A^1(x_1) - A^1(x_2) \right\} + A^1(x_1)A^1(x_2) \right], \\
&(\expari(A))^3(x_1,x_2,x_3)
=A^3(x_1,x_2,x_3) \\
&\quad+\frac{1}{2}\left[A^2(x_1,x_2+x_3)\left\{ A^1(x_2) - A^1(x_3) \right\} + A^2(x_1+x_2,x_3)\left\{ A^1(x_1) - A^1(x_2) \right\} \right. \\
&\quad \left. + A^1(x_1+x_2+x_3)\left\{ A^2(x_1,x_2) - A^2(x_2,x_3) \right\} +A^1(x_1)A^2(x_2,x_3) +A^2(x_1,x_2)A^1(x_3) \right] \\
&\qquad +\frac{1}{6}\Bigl(
\left[ A^1(x_1+x_2+x_3)\left\{ A^1(x_1) - A^1(x_2+x_3) \right\} + A^1(x_1)A^1(x_2+x_3) \right]\left\{ A^1(x_2)-A^1(x_3) \right\} \\
&\qquad+ \left[ A^1(x_1+x_2+x_3)\left\{ A^1(x_1+x_2) - A^1(x_3) \right\} + A^1(x_1+x_2)A^1(x_3) \right]\left\{ A^1(x_1)-A^1(x_2) \right\} \\
&\qquad + \left[ A^1(x_1+x_2)\left\{ A^1(x_1) - A^1(x_2) \right\} + A^1(x_1)A^1(x_2) \right]A^1(x_3)
\Bigr).
\end{align*}
\end{eg}

\begin{rem}
As the above example suggests, we prove inductively that
the map $\expari:\ARI(\mathcal F) \rightarrow \GARI(\mathcal F)$ is bijective.
We denote its inverse map from $\GARI(\mathcal F)$ to $\ARI(\mathcal F)$ by $\logari$.
\end{rem}

\begin{defn}
For $S\in \GARI(\mathcal F)$,
the map
$$
\adari(S):\ARI(\mathcal F)\to \ARI(\mathcal F)
$$
is defined by
$$
\adari(S)(A):=\logari( \gari(S,\expari(A),\invgari(S)) )
$$
for $A\in \ARI(\mathcal F)$.
Here we denote  the inverse element of $S$ in the group $(\GARI(\mathcal F),\gari)$ by
$\invgari(S)$.
\end{defn}

\begin{eg}
For $S\in \GARI(\mathcal F)$ and $A\in \ARI(\mathcal F)$, we have
\begin{align*}
&(\adari(S)(A))^0(\emptyset)=0, \\
&(\adari(S)(A))^1(x_1)=A^1(x_1), \\
&(\adari(S)(A))^2(x_1,x_2)
=A^2(x_1,x_2) + \{ S^1(x_1+x_2)-S^1(x_2) \}A^1(x_1) \\
&\quad - \{ S^1(x_1+x_2)-S^1(x_1) \}A^1(x_2)
+ \{ S^1(x_2)-S^1(x_1) \}A^1(x_1+x_2).
\end{align*}
\end{eg}

\subsection{Alternality and symmetrality}
In this subsection we recall Ecalle's notion of alternality and symmetrality
introduced in \cite{E92}
and we present our formulation of the notions in terms of Sauzin's dimoulds (\cite{Sau}).

\begin{defn}[cf. {\cite[Definition 5.2]{Sau}}]\label{def:2.2.1}
A {\it dimould} $M$ with values in  $\mathcal F$ is a collection
\begin{equation*}
	M:=\left( M^{r,s}(x_1, \dots, x_r;\,x_{r+1}, \dots, x_{r+s}) \right)_{r,s\ge0},
\end{equation*}
with $M^{r,s}(x_1, \dots, x_r;\,x_{r+1}, \dots, x_{r+s}) \in \mathcal{F}_{r+s}$ for $r, s\geq 0$.
We denote the set of all dimoulds with values in $\mathcal F$ by $\mathcal{M}_2(\mathcal F)$.
By the component-wise summation and the component-wise scalar multiple, the set $\mathcal{M}_2(\mathcal F)$ forms a $\Q$-linear space.
The {\it product} of $\mathcal{M}_2(\mathcal F)$ which is denoted by the same symbol $\times$ as the product of $\mathcal{M}(\mathcal F)$ is defined by
\begin{align}\label{eqn:2.2.1}
	&(A\times B)^{r,s}(x_1, \dots, x_r;\,x_{r+1}, \dots, x_{r+s}) \\
	&:=\sum_{i=0}^r\sum_{j=0}^s
	A^{r}(x_1, \dots, x_i;\,x_{r+1}, \dots, x_{r+j})
	B^{s}(x_{i+1}, \dots, x_r;\,x_{r+j+1}, \dots, x_{r+s}), \nonumber
\end{align}
for $A,B\in\mathcal{M}_2(\mathcal F)$ and for $r,s\geq0$.
Then $(\mathcal{M}_2(\mathcal F), \times)$ is a non-commutative, associative $\Q$-algebra.
The unit $\unitdimould$ of $(\mathcal{M}_2(\mathcal F), \times)$ is given by
\begin{equation*}
\unitdimould^{r,s}(x_1, \dots, x_r;\,x_{r+1}, \dots, x_{r+s})
:=\left\{
\begin{array}{ll}
	1 & (r=s=0), \\
	0 & (\mbox{otherwise}).
\end{array}\right.
\end{equation*}
\end{defn}

\begin{defn}\label{def:tensor product}
Let
$$i_\otimes:\mathcal{M}(\mathcal F) \otimes_\Q \mathcal{M}(\mathcal F) \rightarrow \mathcal{M}_2(\mathcal F)$$
be the $\Q$-linear map defined by
$$
i_\otimes(M\otimes N)^{r,s}(x_1, \dots, x_r;\,x_{r+1}, \dots, x_{r+s})
:=M^{r}(x_1, \dots, x_r) N^{s}(x_{r+1}, \dots, x_{r+s})
$$
for $r,s\geq0$.
By abuse of notation, we denote $i_\otimes(M\otimes N)$ simply
by $M\otimes N$ and call it as the {\it tensor product} of $M$ and $N$.
\end{defn}

We put
$\mathcal A_X:=\Q \langle X_\Z \rangle$
to be the non-commutative polynomial $\Q$-algebra generated by
$X_\Z$, and we equip $\mathcal A_X$ a product $\shuffle:\mathcal A_X^{\otimes2} \rightarrow \mathcal A_X$
which is linearly defined by $\emptyset\, \shuffle\, \omega:=\omega\, \shuffle\, \emptyset:=w$ and
\begin{equation}\label{eqn:shuffle product}
	a\omega\ \shuffle\ b\eta
	:=a(\omega\, \shuffle\, b\eta)+b(a\omega\, \shuffle\, \eta),
\end{equation}
for $a,b\in X_\Z$ and $\omega,\eta\in X_\Z^\bullet$.

\begin{defn}[{cf. \cite[Definition 1.4]{FK}}]\label{def:al,il,as,is}
A mould $M\in \ARI(\mathcal F)$ (resp. $\in\GARI(\mathcal F)$) is called {\it alternal} (resp. {\it symmetral}) if we have
\begin{align}\label{eqn:def of al,as}
	&M^{p+q}\bigl( (x_1,\ \dots,\ x_p)\shuffle(x_{p+1},\ \dots,\ x_{p+q}) \bigr) = 0 \\
	&\hspace{4.5cm}
	(\mbox{resp. } =M^{p}(x_1,\ \dots,\ x_p)M^{q}(x_{p+1},\ \dots,\ x_{p+q})) \nonumber
\end{align}
for $p,q\geq1$.
The $\Q$-linear space {$\ARI(\mathcal F)_\al$ (resp. $\GARI(\mathcal F)_\as$)} is defined to be the subset of moulds $M$ in $\ARI(\mathcal F)$ (resp. $\GARI(\mathcal F)$) which are alternal (resp. symmetral).
\end{defn}

To reformulate the notion of the alternality and the symmetrality,
we consider the following map.

\begin{defn}[{\cite[Definition 2.3]{Komi}}]\label{def:shmap}
The $\Q$-linear map
$\shmap: \mathcal{M}(\mathcal{F}) \rightarrow\mathcal{M}_2(\mathcal{F})$
is defined by
\begin{align*}
	\shmap(M)
	&:=\Bigl( M^{r+s}\bigl( (x_1,\ \dots,\ x_r)\shuffle(x_{r+1},\ \dots,\ x_{r+s}) \bigr) \Bigr)_{r,s\ge0}
\end{align*}
for $M\in\mathcal{M}(\mathcal{F})$.
\end{defn}

This map has the following algebraic property.

\begin{lem}[{\cite[Lemma 2.6]{Komi}, \cite[Lemma 5.1]{Sau}}]\label{lem:shmap is alg. hom.}
The map $\shmap$ is a $\Q$-algebra homomorphism.
\end{lem}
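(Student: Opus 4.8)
The plan is to prove that $\shmap$ is a $\Q$-algebra homomorphism by checking the two requirements: it sends the unit $\unitmould$ to the unit $\unitdimould$, and it respects products, i.e. $\shmap(M\times N)=\shmap(M)\times\shmap(N)$ for all $M,N\in\mathcal M(\mathcal F)$. The unit claim is immediate: $\shmap(\unitmould)^{r,s}$ involves $\unitmould^{r+s}$ applied to a sum of shuffles of $(x_1,\dots,x_r)$ and $(x_{r+1},\dots,x_{r+s})$, which is nonzero only when $r+s=0$, hence only when $r=s=0$, where it equals $1$; this is exactly $\unitdimould$.

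For the multiplicativity, I would reduce to a combinatorial identity about the shuffle product. Fix $r,s\ge0$ and write $n=r+s$. The left-hand side is
\begin{equation*}
\shmap(M\times N)^{r,s}=\sum_{k=0}^{n}\;\sum_{\text{shuffles}}\;M^{k}(\text{first }k\text{ letters})\,N^{n-k}(\text{last }n-k\text{ letters}),
\end{equation*}
where the inner sum runs over all terms in $(x_1,\dots,x_r)\shuffle(x_{r+1},\dots,x_{r+s})$ and the split into first $k$ / last $n-k$ letters is the one coming from the definition of $\times$ on $\mathcal M(\mathcal F)$. The right-hand side, by the definition \eqref{eqn:2.2.1} of $\times$ on $\mathcal M_2(\mathcal F)$, is
\begin{equation*}
\sum_{i=0}^{r}\sum_{j=0}^{s} M^{i+j}\bigl((x_1,\dots,x_i)\shuffle(x_{r+1},\dots,x_{r+j})\bigr)\,N^{(r-i)+(s-j)}\bigl((x_{i+1},\dots,x_r)\shuffle(x_{r+j+1},\dots,x_{r+s})\bigr).
\end{equation*}
So the whole statement boils down to the classical ``deconcatenation coproduct is an algebra map for shuffle'' fact, in the following precise form: any shuffle of $(x_1,\dots,x_r)$ with $(x_{r+1},\dots,x_{r+s})$, together with a choice of initial segment of length $k$, corresponds bijectively to a quadruple $(i,j,u,v)$ where $i+j=k$, $u$ is a shuffle of $(x_1,\dots,x_i)$ with $(x_{r+1},\dots,x_{r+j})$, and $v$ is a shuffle of $(x_{i+1},\dots,x_r)$ with $(x_{r+j+1},\dots,x_{r+s})$; the initial segment of length $k$ is $u$ and the terminal segment is $v$. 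The key point enabling this is that in any shuffle word the letters from the first block appear in their original relative order, and likewise for the second block, so "the first $k$ letters of the shuffle word" is determined by how many came from each block — namely some $i$ from the first and $j=k-i$ from the second, these being necessarily the first $i$ and first $j$ letters of the respective blocks.

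The cleanest way I would organize this is to use the standard fact that the shuffle algebra $\mathcal A_X$ with the deconcatenation coproduct $\Delta$ is a bialgebra, i.e. $\Delta(u\shuffle v)=\Delta(u)\shuffle\Delta(v)$ in $\mathcal A_X\otimes\mathcal A_X$ (proved by an easy induction on total length using the recursion \eqref{eqn:shuffle product} for $\shuffle$ and the analogous recursion for $\Delta$). Then $\shmap(M)^{r,s}$ is, up to the bookkeeping of which variables go where, nothing but pairing $M$ against the $(r,s)$-graded component of $\Delta$, and both products $\times$ (on $\mathcal M(\mathcal F)$ and on $\mathcal M_2(\mathcal F)$) are the convolution products dual to $\Delta$ and $\Delta^{\otimes 2}$ respectively; multiplicativity of $\shmap$ is then formal from the bialgebra identity. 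I expect the main obstacle to be purely notational: matching up the two splittings of variables — the deconcatenation split used implicitly in $\mathcal M(\mathcal F)$'s product versus the four-fold split $(i,j,r-i,s-j)$ appearing in \eqref{eqn:2.2.1} — and verifying that the "first $i$ letters of block one" and "first $j$ letters of block two" genuinely are the variables $x_1,\dots,x_i$ and $x_{r+1},\dots,x_{r+j}$ after the shuffle, so that the arguments fed to $M$ and $N$ on both sides literally agree. Once that index chase is set up carefully, no analytic input is needed; everything happens at the level of words, and the result follows. Alternatively, one may simply invoke \cite[Lemma 2.6]{Komi} or \cite[Lemma 5.1]{Sau} verbatim, since this lemma is quoted from there.
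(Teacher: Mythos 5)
Your proposal is correct: unit preservation is immediate, and the multiplicativity reduces, exactly as you say, to the shuffle--deconcatenation compatibility (any prefix of a shuffle of $(x_1,\dots,x_r)$ and $(x_{r+1},\dots,x_{r+s})$ consists of the first $i$ letters of the first block shuffled with the first $j$ letters of the second, independently of the suffix), which matches the four-fold split $(i,j,r-i,s-j)$ in the dimould product \eqref{eqn:2.2.1}. The paper itself gives no proof but merely cites \cite[Lemma 2.6]{Komi} and \cite[Lemma 5.1]{Sau}, and your argument is precisely the standard one underlying those references, so there is no substantive difference in approach.
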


By using this map $\shmap$ and the tensor product (Definition \ref{def:tensor product}), we reformulate symmetral moulds and alternal moulds in terms of dimoulds as follows

\begin{prop}[{\cite[Proposition 2.4]{Komi}}]\label{prop:gp-like, Lie-like}
For a mould $M\in\mathcal{M}(\mathcal{F})$, the following equivalences hold:

{\rm (i).} $M\in\ARI(\mathcal F)_\al$
if and only if
$
\shmap(M)=M\otimes \unitmould+\unitmould\otimes M$,

{\rm (ii).} $M\in\GARI(\mathcal F)_\as$
if and only if
$\shmap(M)=M\otimes M$
and $M(\emptyset)=1$.
\end{prop}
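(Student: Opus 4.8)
The plan is to deduce both equivalences directly from the algebra-homomorphism property of $\shmap$ (Lemma \ref{lem:shmap is alg. hom.}) together with an explicit comparison of the ``graded pieces'' of the two sides. The key observation is that, for a mould $M$, the dimould $\shmap(M)$ has $(r,s)$-component $M^{r+s}\bigl((x_1,\dots,x_r)\shuffle(x_{r+1},\dots,x_{r+s})\bigr)$, while $(M\otimes\unitmould+\unitmould\otimes M)^{r,s}$ equals $M^r(x_1,\dots,x_r)$ when $s=0$, equals $M^s(x_{r+1},\dots,x_{r+s})$ when $r=0$, and vanishes otherwise; and $(M\otimes M)^{r,s}=M^r(x_1,\dots,x_r)M^s(x_{r+1},\dots,x_{r+s})$. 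So in each case the asserted dimould identity, read off in bidegree $(p,q)$ with $p,q\geq1$, is exactly the defining condition \eqref{eqn:def of al,as} for alternality, resp. symmetrality, while the bidegrees with $r=0$ or $s=0$ hold automatically (using $M(\emptyset)=0$ in the $\ARI$ case, and recording $M(\emptyset)=1$ as a separate clause in the $\GARI$ case). This gives both directions at once once the bookkeeping in bidegrees $(0,s)$ and $(r,0)$ is checked.

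Concretely, for (i): First I would note that $M\in\ARI(\mathcal F)$ means $M^0(\emptyset)=0$, so the empty shuffle conventions give $\shmap(M)^{0,s}=M^s(x_{r+1},\dots,x_{r+s})$ and $\shmap(M)^{r,0}=M^r(x_1,\dots,x_r)$, matching the right-hand side $M\otimes\unitmould+\unitmould\otimes M$ in those bidegrees unconditionally. Then, in every bidegree $(p,q)$ with $p,q\geq1$, the right-hand side is $0$, so the dimould identity $\shmap(M)=M\otimes\unitmould+\unitmould\otimes M$ is equivalent to $M^{p+q}\bigl((x_1,\dots,x_p)\shuffle(x_{p+1},\dots,x_{p+q})\bigr)=0$ for all $p,q\geq1$, which is the definition of alternality. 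This settles (i).

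For (ii): the point is that $\GARI(\mathcal F)_\as\subseteq\GARI(\mathcal F)$ already forces $M^0(\emptyset)=1$, so I would bundle that into the statement as the clause $M(\emptyset)=1$; given it, the empty-shuffle conventions yield $\shmap(M)^{0,s}=M^s=1\cdot M^s=(M\otimes M)^{0,s}$ and similarly $\shmap(M)^{r,0}=(M\otimes M)^{r,0}$. In bidegrees $(p,q)$ with $p,q\geq1$, the identity $\shmap(M)=M\otimes M$ unwinds precisely to $M^{p+q}\bigl((x_1,\dots,x_p)\shuffle(x_{p+1},\dots,x_{p+q})\bigr)=M^p(x_1,\dots,x_p)M^q(x_{p+1},\dots,x_{p+q})$, i.e. symmetrality; conversely symmetrality plus $M(\emptyset)=1$ gives the full dimould identity bidegree by bidegree. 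I expect the main (minor) obstacle to be purely notational: carefully matching the definition of the $\times$-product on dimoulds against the fact that $M\otimes\unitmould$, $\unitmould\otimes M$, and $M\otimes M$ are grouplike/primitive in the appropriate bialgebra-theoretic sense, and making sure the degenerate bidegrees are handled with the right empty-word conventions from Definition \ref{def:flexion} and the shuffle product \eqref{eqn:shuffle product}. Neither Lemma \ref{lem:shmap is alg. hom.} nor any deeper structure of $\ari$/$\gari$ is actually needed for this particular proposition; it is a direct unraveling of definitions, and I would present it as such.
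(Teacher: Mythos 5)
Your proposal is correct: the paper itself gives no proof of this proposition (it is quoted from \cite[Proposition 2.4]{Komi}), and your bidegree-by-bidegree unwinding of $\shmap(M)$ against $M\otimes \unitmould+\unitmould\otimes M$, resp.\ $M\otimes M$, is exactly the standard direct verification, with Lemma \ref{lem:shmap is alg. hom.} indeed not needed. The one piece of bookkeeping worth writing out explicitly is the $(0,0)$-bidegree in the ``if'' direction of (i): there the identity reads $M^0(\emptyset)=2M^0(\emptyset)$, which is what forces $M^0(\emptyset)=0$ and hence $M\in\ARI(\mathcal F)$ (just as in (ii) the $(0,0)$-component only gives $M^0(\emptyset)=M^0(\emptyset)^2$, which is why $M(\emptyset)=1$ must appear as a separate clause, as you noted).
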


\subsection
{The singulator operator}
We recall the definition of two maps $\sang$ and $\slang$,
which are required to compare Brown's and Ecalle's elements in next section.
In this subsection we take $\mathcal F$ to be the family $\mathcal F_\Lau$ of Laurent series
since denominators are required in their definitions.

\begin{defn}[{\cite[\S 5.4]{E-flex}}]
The {\it singulator operator}
$$
\sang: {\mathcal M}(\mathcal F_\Lau)\to {\mathcal M}(\mathcal F_\Lau)
$$
is the map sending $M \in{\mathcal M}(\mathcal F_\Lau)$ to
$$
\sang(M):=\frac{1}{2}\left( \id_{{\mathcal M}(\mathcal F_\Lau)} + \nega \circ \adari(\paj) \right) \left( \paj^{\times-1} \times M\times \paj \right).
$$
Here
$\paj\in\mathcal M(\mathcal F_\Lau)$ is defined by
$$
\paj(x_1,\dots,x_m)=\frac{1}{x_1(x_1+x_2)\cdots (x_1+\cdots+x_m)}.
$$
\end{defn}

\begin{eg}
For $A \in\ARI(\mathcal F_\Lau)$, we have
\begin{align*}
&(\sang(A))^0(\emptyset)=0, \\
&(\sang(A))^1(x_1)=\frac{1}{2}\left\{ A^1(x_1)+A^1(-x_1) \right\}, \\
&(\sang(A))^2(x_1,x_2)
=\frac{1}{2}\left\{ A^2(x_1,x_2) +A^2(-x_1,-x_2) \right\} \\
&\qquad +\frac{1}{2x_2}\left\{A^1(x_1) -A^1(-x_1)\right\} -\frac{1}{2x_1}\left\{ A^1(x_2) -A^1(-x_2) \right\} \\
&\ \qquad +\frac{x_1}{2x_2(x_1+x_2)}A^1(-x_1) -\frac{x_2}{2x_1(x_1+x_2)}A^1(-x_2)  + \frac{x_2-x_1}{2x_1x_2}A^1(-x_1-x_2).
\end{align*}
\end{eg}

\begin{rem}\label{rem: proof for conj}
It seems that there is a sign error in the formula in  \cite[Page 78]{E-flex},
it should be read as
  \begin{align*}
  2(\sang.S)^{\bm{w}} & =+\sum_{\bm{a}w_{i}\bm{b}=\bm{w}}\mupaj^{\bm{a}}S^{w_{i}}\paj^{\bm{b}}\\
   & \quad+\sum_{\bm{a}w_{i}\bm{b}=\bm{w}}\paj^{\bm{a}\rfloor}(\mathrm{neg}.S)^{\lceil w_{i}\rceil}\mupaj^{\lfloor\bm{b}}\\
   & \quad  -\sum_{\bm{a}w_{i}\bm{b}w_{r}=\bm{w}}\paj^{\bm{a}\rfloor}(\mathrm{neg}.S)^{\lceil w_{i}\rceil}\mupaj^{\lfloor\bm{b}}P(|\bm{u}|)\\
   & \quad +\sum_{w_{1}\bm{a}w_{i}\bm{b}=\bm{w}}\paj^{\bm{a}\rfloor}(\mathrm{neg}.S)^{\lceil w_{i}\rceil}\mupaj^{\lfloor\bm{b}}P(|\bm{u}|),
  \end{align*}
  which follows from \cite[(5.31)]{E-flex}.
\end{rem}


\begin{defn}[{\cite[Definition 4.2.2]{S-ARIGARI}}]
The mould $\dupal\in\ARI(\mathcal F_\Lau)$ is defined by
\begin{align*}
\dupal^m(\vecx_m)
:=\frac{B_m}{m!}\frac{1}{x_1\cdots x_m}\sum_{k=0}^{m-1}(-1)^k\binom{m-1}{k}x_{k+1},
\end{align*}
for $m\ge1$.
Here, symbols $B_m$ are the Bernoulli numbers which are defined by $\frac{x}{e^x-1}=\sum_{m\ge0}\frac{B_m}{m!}x^m$.
\end{defn}

\begin{eg}
Note that we have $\dupal^{2n+1}(\vecx_{2n+1})=0$ for $n\ge1$.
We have
\begin{align*}
&\dupal^1(\vecx_1)=-\frac{1}{2},
\quad \dupal^2(\vecx_2)=\frac{x_1-x_2}{12x_1x_2},
\quad \dupal^4(\vecx_4)=-\frac{x_1-3x_2+3x_3-x_4}{720x_1x_2x_3x_4}.
\end{align*}
\end{eg}

To show that the mould is alternal, we prepare the following.

\begin{lem}\label{lem:alternal mould by inductive definition}
Let $A\in\ARI(\mathcal F_\Lau)$ with the condition
\begin{equation}\label{eqn:alternal mould by inductive definition}
A^m(x_1,\dots,x_m)=A^{m-1}(x_1,\dots,x_{m-1}) - A^{m-1}(x_2,\dots,x_m)
\end{equation}
for $m\ge2$.
Then the mould $A$ is alternal.
\end{lem}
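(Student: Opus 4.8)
The plan is to prove alternality by induction on the total length $m=p+q$, combining the defining recursion \eqref{eqn:alternal mould by inductive definition} with the two ``boundary'' deconcatenations of the shuffle product. Write $W_{p,q}$ for the formal sum of words $(x_1,\dots,x_p)\shuffle(x_{p+1},\dots,x_{p+q})$ and set $\Sigma_{p,q}:=A^{p+q}(W_{p,q})$; the goal is $\Sigma_{p,q}=0$ for all $p,q\ge1$. First I would record the combinatorial fact that $W_{p,q}$ splits, according to the last letter of each word (which is $x_p$ or $x_{p+q}$), into two blocks, and that deleting that last letter is a length-preserving bijection of the first block onto $(x_1,\dots,x_{p-1})\shuffle(x_{p+1},\dots,x_{p+q})$ and of the second onto $(x_1,\dots,x_p)\shuffle(x_{p+1},\dots,x_{p+q-1})$; symmetrically for the first letter. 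Applying \eqref{eqn:alternal mould by inductive definition} to each word $w=(w_1,\dots,w_m)$ of $W_{p,q}$ in the form $A^m(w)=A^{m-1}(w_1,\dots,w_{m-1})-A^{m-1}(w_2,\dots,w_m)$, and regrouping along these two splittings, yields
\[
\Sigma_{p,q}=(L_1-F_1)+(L_2-F_2),
\]
where $L_1=A^{m-1}\bigl((x_1,\dots,x_{p-1})\shuffle(x_{p+1},\dots,x_{p+q})\bigr)$ and $F_1=A^{m-1}\bigl((x_2,\dots,x_p)\shuffle(x_{p+1},\dots,x_{p+q})\bigr)$ arise from deleting the last letter of the words ending in $x_p$ and the first letter of the words beginning with $x_1$, and $L_2,F_2$ are the analogous terms attached to $x_{p+q}$ and $x_{p+1}$.

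Then I would check that $L_1=F_1$ and $L_2=F_2$ in all cases. If $p\ge2$, both $L_1$ and $F_1$ are $A^{m-1}$ evaluated on a shuffle of a length-$(p-1)$ word with a length-$q$ word (only the labelling of the first block differs), with both parts nonempty and $(p-1)+q=m-1<m$, so $L_1=F_1=0$ by the induction hypothesis. If $p=1$, the length-$(p-1)$ block is the empty word in both $L_1$ and $F_1$, so each collapses to the single monomial $A^{m-1}(x_2,\dots,x_m)$, and again $L_1=F_1$. The identity $L_2=F_2$ follows in the same way with $q$ in place of $p$. Hence $\Sigma_{p,q}=0$; the base case $m=2$ is the instance $p=q=1$, where $L_1=F_1=A^1(x_2)$ and $L_2=F_2=A^1(x_1)$ and no induction hypothesis is needed. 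By Definition \ref{def:al,il,as,is} this proves $A$ is alternal.

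The only subtle point in this argument is the bookkeeping at the ends $p=1$ or $q=1$: there one of the two shuffle factors degenerates to the empty word, so the corresponding $L_i$ and $F_i$ fall outside the scope of the induction hypothesis and must instead be identified by hand as equal single monomials $A^{m-1}(\dots)$. Everything else is the routine deconcatenation property of the shuffle product together with the termwise use of \eqref{eqn:alternal mould by inductive definition}. As an alternative one could observe that \eqref{eqn:alternal mould by inductive definition} is equivalent to the identity $A=D+\lu(A,C)$ in $\ARI(\mathcal F_\Lau)$, where $C$ and $D$ are the length-one moulds with $C^1(x_1)=1$ and $D^1=A^1$; solving this recursion degree by degree expresses $A^m$ as a finite $\Q$-linear combination of iterated $\lu$-brackets of $C$ and $D$, whereupon alternality follows because all length-one moulds are alternal and, by Lemma \ref{lem:shmap is alg. hom.} together with Proposition \ref{prop:gp-like, Lie-like}, alternal moulds are closed under the bracket $\lu$. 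Since the direct induction is shorter, I would present that one.
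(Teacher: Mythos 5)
Your proposal is correct and follows essentially the same route as the paper's proof: induction on the total length, splitting the shuffle sum according to the last and the first letters, applying \eqref{eqn:alternal mould by inductive definition} termwise, and regrouping into shuffle sums of length $p+q-1$ that vanish by the induction hypothesis. The only difference is bookkeeping at the boundary: the paper checks $(p,q)=(1,1),(1,2),(2,1)$ by direct computation and runs the regrouping only for $p,q\ge2$, whereas you treat $p=1$ or $q=1$ uniformly inside the induction by observing that the degenerate terms $L_i$ and $F_i$ coincide as single monomials (your sketched alternative via $A=D+\lu(A,C)$ and closure of alternality under $\lu$ is also valid, but is not the paper's argument).
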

\begin{proof}
It is sufficient to prove
\begin{equation}\label{eqn:A is alternal}
A^{m+n}\bigl( (x_1,\dots,x_m)\shuffle (x_{m+1},\dots,x_{m+n}) \bigr) = 0
\end{equation}
for $m,n\ge1$.
We prove this by induction on $m+n$.
When $m=n=1$, by \eqref{eqn:alternal mould by inductive definition}, we have
\begin{align*}
A^2\bigl( (x_1)\shuffle (x_2) \bigr)
=A^2(x_1,x_2) + A^2(x_2,x_1)
=\{A^1(x_1)-A^1(x_2)\} + \{A^1(x_2)-A^1(x_1)\}
=0.
\end{align*}
When $m=1$ and $n=2$, by \eqref{eqn:alternal mould by inductive definition}, we calculate
\begin{align*}
&A^3\bigl( (x_1)\shuffle (x_2,x_3) \bigr) \\
&=A^3(x_1,x_2,x_3) + A^3(x_2,x_1,x_3) + A^3(x_2,x_3,x_1) \\
&=\{A^2(x_1,x_2)-A^2(x_2,x_3)\} + \{A^2(x_2,x_1)-A^2(x_1,x_3)\} + \{A^2(x_2,x_3)-A^2(x_3,x_1)\} \\
&=0.
\end{align*}
The same proof holds for $m=2$ and $n=1$. 
We consider the case for $m,n\ge2$.
For our simplicity, we put
$$
x_{i,j}:=\left\{\begin{array}{ll}
(x_i,\dots,x_j) & (i\le j), \\
\emptyset & (i>j).
\end{array}\right.
$$
Then we have
\begin{align*}
&A^{m+n}\bigl( x_{1,m}\shuffle\, x_{m+1,m+n} \bigr) \\
&=A^{m+n}\bigl( x_{1,m-1}\shuffle\, x_{m+1,m+n}, x_m \bigr)
+ A^{m+n}\bigl( x_{1,m}\shuffle\, x_{m+1,m+n-1}, x_{m+n} \bigr) \\
&=A^{m+n}\bigl( x_1, x_{2,m-1}\shuffle\, x_{m+1,m+n}, x_m \bigr)
+ A^{m+n}\bigl( x_{m+1}, x_{1,m-1}\shuffle\, x_{m+2,m+n}, x_m \bigr) \\
&\quad+ A^{m+n}\bigl( x_1, x_{2,m}\shuffle\, x_{m+1,m+n-1}, x_{m+n} \bigr)
+ A^{m+n}\bigl( x_{m+1}, x_{1,m}\shuffle\, x_{m+2,m+n-1}, x_{m+n} \bigr).
\end{align*}
By using \eqref{eqn:alternal mould by inductive definition}, we calculate
\begin{align*}
&A^{m+n}\bigl( x_{1,m}\shuffle\, x_{m+1,m+n} \bigr) \\
&=A^{m+n-1}\bigl( x_1, x_{2,m-1}\shuffle\, x_{m+1,m+n} \bigr)
-A^{m+n-1}\bigl( x_{2,m-1}\shuffle\, x_{m+1,m+n}, x_m \bigr) \\
&\quad+ A^{m+n-1}\bigl( x_{m+1}, x_{1,m-1}\shuffle\, x_{m+2,m+n} \bigr)
-A^{m+n-1}\bigl( x_{1,m-1}\shuffle\, x_{m+2,m+n}, x_m \bigr) \\
&\qquad+ A^{m+n-1}\bigl( x_1, x_{2,m}\shuffle\, x_{m+1,m+n-1} \bigr)
- A^{m+n-1}\bigl( x_{2,m}\shuffle\, x_{m+1,m+n-1}, x_{m+n} \bigr) \\
&\qquad\quad+ A^{m+n-1}\bigl( x_{m+1}, x_{1,m}\shuffle\, x_{m+2,m+n-1} \bigr)
-A^{m+n-1}\bigl( x_{1,m}\shuffle\, x_{m+2,m+n-1}, x_{m+n} \bigr) \\
&=A^{m+n-1}\bigl( x_{1,m-1}\shuffle\, x_{m+1,m+n} \bigr)
-A^{m+n-1}\bigl( x_{2,m}\shuffle\, x_{m+1,m+n} \bigr) \\
&\quad+ A^{m+n-1}\bigl( x_{1,m}\shuffle\, x_{m+1,m+n-1} \bigr)
-A^{m+n-1}\bigl( x_{1,m}\shuffle\, x_{m+2,m+n} \bigr).
\end{align*}
So, by induction hypothesis, we get $A^{m+n}\bigl( x_{1,m}\shuffle\, x_{m+1,m+n} \bigr)=0$ for $m,n\ge2$.
Hence, we obtain \eqref{eqn:A is alternal}, that is, the mould $A$ is alternal.
\end{proof}

\begin{prop}\label{prop:dupal is alternal mould}
The mould $\dupal$ is in $\ARI(\mathcal F_\Lau)_\al$.
\end{prop}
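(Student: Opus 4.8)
The plan is to invoke Lemma~\ref{lem:alternal mould by inductive definition}. It cannot be applied to $\dupal$ directly --- already $\dupal^{2}(x_1,x_2)=\frac{x_1-x_2}{12x_1x_2}\neq 0=\dupal^{1}(x_1)-\dupal^{1}(x_2)$, so $\dupal$ fails the recursion \eqref{eqn:alternal mould by inductive definition} --- but it will apply to the polynomial ``numerator mould'' underlying $\dupal$, after which I transfer the conclusion back to $\dupal$ using that alternality is unchanged when each depth-$m$ component is multiplied by a function symmetric in $x_1,\dots,x_m$.

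First I would introduce $Q\in\ARI(\mathcal F_\pol)\subset\ARI(\mathcal F_\Lau)$ with $Q^{0}(\emptyset):=0$ and
\[
Q^{m}(x_1,\dots,x_m):=\sum_{k=0}^{m-1}(-1)^{k}\binom{m-1}{k}x_{k+1}\qquad (m\ge1),
\]
so that $\dupal^{m}(\vecx_m)=\frac{B_m}{m!\,x_1\cdots x_m}\,Q^{m}(\vecx_m)$ for all $m\ge1$ and the prefactor $\frac{B_m}{m!\,x_1\cdots x_m}$ is symmetric in $x_1,\dots,x_m$. The main step is to check that $Q$ satisfies \eqref{eqn:alternal mould by inductive definition}, i.e.\ $Q^{m}(x_1,\dots,x_m)=Q^{m-1}(x_1,\dots,x_{m-1})-Q^{m-1}(x_2,\dots,x_m)$ for $m\ge2$: after reindexing the second sum, the right-hand side becomes $\sum_{k=0}^{m-2}(-1)^{k}\binom{m-2}{k}x_{k+1}+\sum_{k=1}^{m-1}(-1)^{k}\binom{m-2}{k-1}x_{k+1}$, and comparing the coefficient of each $x_{k+1}$ via Pascal's rule $\binom{m-2}{k}+\binom{m-2}{k-1}=\binom{m-1}{k}$ (the endpoints $k=0$ and $k=m-1$ matching $\binom{m-1}{0}=\binom{m-1}{m-1}=1$) gives exactly $Q^{m}(x_1,\dots,x_m)$. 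Hence $Q$ is alternal by Lemma~\ref{lem:alternal mould by inductive definition}.

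Finally I would transfer alternality to $\dupal$. Fix $p,q\ge1$ and set $m:=p+q$. Every word occurring in the shuffle $(x_1,\dots,x_p)\shuffle(x_{p+1},\dots,x_m)$ is a rearrangement of $(x_1,\dots,x_m)$, so on each such word the symmetric prefactor takes the common value $\frac{B_m}{m!\,x_1\cdots x_m}$; pulling it out of the sum, $\dupal^{m}$ evaluated on $(x_1,\dots,x_p)\shuffle(x_{p+1},\dots,x_m)$ equals $\frac{B_m}{m!\,x_1\cdots x_m}$ times $Q^{m}$ evaluated on the same shuffle, which vanishes since $Q$ is alternal. As $p,q\ge1$ were arbitrary, $\dupal\in\ARI(\mathcal F_\Lau)_\al$. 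The whole argument is formal once the recursion for $Q$ is established; the only point needing care is the endpoint bookkeeping in the Pascal-rule step. (Alternatively one could skip the auxiliary mould and show $\sum_{w}Q^{m}(w)=0$ over the shuffle set directly, computing the coefficient of each $x_i$ by the identity $\binom{m-1}{j-1}\binom{m-j}{p-j+1}=\binom{m-1}{p}\binom{p}{j-1}$ and summing a binomial expansion to zero, but reducing to Lemma~\ref{lem:alternal mould by inductive definition} is shorter.)
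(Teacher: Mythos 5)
Your proof is correct and follows essentially the same route as the paper: the paper also introduces the auxiliary mould $A^m(\vecx_m)=\sum_{k=0}^{m-1}(-1)^k\binom{m-1}{k}x_{k+1}$ (your $Q$), applies Lemma \ref{lem:alternal mould by inductive definition} to it, and then pulls the symmetric prefactor $\frac{B_m}{m!\,x_1\cdots x_m}$ out of the shuffle sum. Your explicit Pascal-rule verification of the recursion and the remark that $\dupal$ itself does not satisfy \eqref{eqn:alternal mould by inductive definition} are just details the paper leaves implicit.
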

\begin{proof}
Let $A\in\ARI(\mathcal F_\Lau)$ be the mould defined by $A^m(\vecx_m):=\sum_{k=0}^{m-1}(-1)^k\binom{m-1}{k}x_{k+1}$ for $m\ge1$.
Then the mould $A$ satisfies \eqref{eqn:alternal mould by inductive definition},
so by Lemma \ref{lem:alternal mould by inductive definition},  $A$ is an alternal mould.
Because $\dupal^m(\vecx_m)=\frac{B_m}{m!}\frac{1}{x_1\cdots x_m}A^m(\vecx_m)$ for $m\ge1$, we have
\begin{align*}
&\dupal^{m+n}((x_1,\dots,x_m)\shuffle(x_{m+1},\dots,x_{m+n})) \\
&=\frac{B_{m+n}}{(m+n)!}\frac{1}{x_1\cdots x_{m+n}}A^{m+n}((x_1,\dots,x_m)\shuffle(x_{m+1},\dots,x_{m+n})) \\
&=0.
\end{align*}
Hence, the mould $\dupal$ is alternal.
\end{proof}

\begin{defn}
For $U\in\mathcal{M}(\mathcal F)$, $M\in\mathcal{M}(\mathcal F)$ and $N\in\mathcal{M}_2(\mathcal F)$,
we define
$U\cdot M\in\mathcal{M}(\mathcal F)$ and $U\cdot N\in\mathcal{M}_2(\mathcal F)$ by
\begin{align*}
(U\cdot M)^{l(\omega)}(\omega)&:=U^{l(\omega)}(\omega)M^{l(\omega)}(\omega), \\
(U\cdot N)^{l(\omega), l(\eta)}(\omega;\eta)&:=U^{l(\omega), l(\eta)}(\omega,\eta)N^{l(\omega), l(\eta)}(\omega;\eta),
\end{align*}
for $\omega,\eta\in X^\bullet$.
\end{defn}


\begin{lem}
Suppose that $U\in\ARI({\mathcal F})$ satisfies 
\begin{equation}\label{cond:A.1}
	\left\{\begin{array}{l}
		\mbox{$U^{l(\omega_1) + l(\omega_2)}(\omega_1,\omega_2)=U^{l(\omega_1)}(\omega_1)+U^{l(\omega_2)}(\omega_2)$\qquad ($\omega_1,\omega_2\in X^\bullet$),} \\
		\mbox{$U^{l(\omega)}(\omega)\neq0$ \qquad ($\omega\in X^\bullet\setminus\{\emptyset\}$).}
	\end{array}\right.
\end{equation}
Then the following assertions hold:
\item[\rm (i)]
$U\cdot\mathpzc{Sh}(M)=\mathpzc{Sh}(U\cdot M)$\qquad $(M\in\mathcal M(\mathcal F))$,
\item[\rm (ii)]
$U\cdot(M\otimes N) = (U\cdot M)\otimes N + M\otimes(U\cdot N)$\qquad $(M,N\in\mathcal M(\mathcal F))$,
\item[\rm (iii)]
For $M,N\in\mathcal M(\mathcal F)$ (or for $M,N\in\mathcal M_2(\mathcal F)$), we have
$$
U \cdot(M \times N) = (U\cdot M)\times N + M\times(U\cdot M).
$$
\end{lem}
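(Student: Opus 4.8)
The plan is to reduce all three assertions to a single structural consequence of hypothesis \eqref{cond:A.1}. Iterating the additivity relation (and specializing $\omega_2=\emptyset$, which forces $U^0(\emptyset)=0$) shows that for every word $\omega=(y_1,\dots,y_m)\in X^\bullet$,
\[
U^{m}(y_1,\dots,y_m)=U^1(y_1)+\cdots+U^1(y_m).
\]
Thus the scalar $U^{l(\omega)}(\omega)$ depends only on the multiset of letters of $\omega$: it is unchanged under any permutation of the letters, and it splits additively under any partition of the letters into blocks. In the notation of the definition preceding the lemma, $U^{l(\omega),l(\eta)}(\omega,\eta)$ is $U^{l(\omega)+l(\eta)}$ evaluated on the concatenation, so in particular $U^{l(\omega),l(\eta)}(\omega,\eta)=U^{l(\omega)}(\omega)+U^{l(\eta)}(\eta)$. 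Granting this, each of (i)--(iii) states that componentwise multiplication by $U$ is a derivation for the relevant combinatorial operation, and each verification is a direct expansion.

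For (i), I would fix $r,s\ge0$ and evaluate both dimoulds at $(x_1,\dots,x_r;\,x_{r+1},\dots,x_{r+s})$. By the definitions of $U\cdot(-)$ on dimoulds and of $\mathpzc{Sh}$, the left-hand side equals $U^{r+s}(x_1,\dots,x_{r+s})\cdot M^{r+s}\bigl((x_1,\dots,x_r)\shuffle(x_{r+1},\dots,x_{r+s})\bigr)$. On the right-hand side, $\mathpzc{Sh}(U\cdot M)^{r,s}$ equals $(U\cdot M)^{r+s}\bigl((x_1,\dots,x_r)\shuffle(x_{r+1},\dots,x_{r+s})\bigr)$, i.e. the sum over the words $w$ occurring in that shuffle of $U^{r+s}(w)\,M^{r+s}(w)$. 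The one point requiring care is here: every such $w$ is a rearrangement of $(x_1,\dots,x_{r+s})$, so by the first paragraph $U^{r+s}(w)=U^{r+s}(x_1,\dots,x_{r+s})$ is constant over the sum; pulling it out gives exactly the left-hand side. (That this scalar also equals $U^r(x_1,\dots,x_r)+U^s(x_{r+1},\dots,x_{r+s})$, as required to identify it with the dimould version of $U\cdot(-)$, is again \eqref{cond:A.1}.)

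For (ii) and (iii) no reordering occurs and the expansions are immediate. In (ii), evaluating at $(x_1,\dots,x_r;\,x_{r+1},\dots,x_{r+s})$, the left side is $U^{r+s}(x_1,\dots,x_{r+s})\,M^r(x_1,\dots,x_r)\,N^s(x_{r+1},\dots,x_{r+s})$ while the right side is $\bigl(U^r(x_1,\dots,x_r)+U^s(x_{r+1},\dots,x_{r+s})\bigr)M^r(x_1,\dots,x_r)N^s(x_{r+1},\dots,x_{r+s})$, and these agree by \eqref{cond:A.1}. In (iii) for moulds, apply $(M\times N)^m=\sum_{k=0}^m M^k(x_1,\dots,x_k)N^{m-k}(x_{k+1},\dots,x_m)$: the left side multiplies the $k$-th term by $U^m(x_1,\dots,x_m)=U^k(x_1,\dots,x_k)+U^{m-k}(x_{k+1},\dots,x_m)$, which is precisely the weight on the $k$-th term of the right side. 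For dimoulds one argues identically using the product \eqref{eqn:2.2.1}, splitting $U$ over the four index blocks $\{1,\dots,i\}$, $\{i+1,\dots,r\}$, $\{r+1,\dots,r+j\}$, $\{r+j+1,\dots,r+s\}$ and reassembling by additivity.

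There is no genuine obstacle; the lemma is bookkeeping organized around the reordering-invariance of $U$, and the only spot deserving a sentence is the constancy of $U^{r+s}$ across the shuffle sum in (i). Note that the second condition in \eqref{cond:A.1}, namely $U^{l(\omega)}(\omega)\neq0$ for $\omega\neq\emptyset$, is not used here; it is recorded for later arguments.
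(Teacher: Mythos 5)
Your proposal is correct: the paper itself disposes of this lemma with the single line that it follows by a straightforward calculation, and your write-up is exactly that calculation, with the one point worth spelling out (the constancy of $U^{r+s}$ over all words in the shuffle sum, via $U^{m}(y_1,\dots,y_m)=U^1(y_1)+\cdots+U^1(y_m)$) handled correctly. Your remarks that the nonvanishing hypothesis in \eqref{cond:A.1} is not needed here, and that the second term in (iii) should read $M\times(U\cdot N)$, are also accurate.
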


\begin{proof}
It can be proved by a straightforward calculation.
\end{proof}

\begin{prop}[{cf. \cite[Th\'{e}or\`{e}me IV.2]{Cre}}]\label{prop:Cresson's proposition}
Let $U\in\ARI(\mathcal F)$ be a mould satisfying the conditions (\ref{cond:A.1}).
Suppose $A\in\ARI(\mathcal F)$ and $S\in\GARI(\mathcal F)$ satisfy $U\cdot S=A\times S$ (or $=S\times A$).
Then the following assertions are equivalent:
	\item[\rm (i)] $A$ is alternal.
	\item[\rm (ii)] $S$ is symmetral.
\end{prop}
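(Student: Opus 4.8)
The plan is to transport the hypothesis through the algebra homomorphism $\shmap$ of Lemma~\ref{lem:shmap is alg. hom.}, using the preceding lemma to commute $U\cdot(-)$ past $\shmap$, past $\otimes$ and past $\times$, and then to read alternality and symmetrality off from their dimould characterizations in Proposition~\ref{prop:gp-like, Lie-like}. Throughout I write $B:=A\otimes\unitmould+\unitmould\otimes A\in\mathcal M_2(\mathcal F)$ and observe $B^{0,0}(\emptyset;\emptyset)=2A^0(\emptyset)=0$, since $A\in\ARI(\mathcal F)$. I will treat the hypothesis $U\cdot S=A\times S$; the variant $U\cdot S=S\times A$ is handled in the same way with left and right multiplications exchanged.

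The first step is to record two identities in $\mathcal M_2(\mathcal F)$. Applying $\shmap$ to $U\cdot S=A\times S$, using part (i) of the preceding lemma and the multiplicativity of $\shmap$, gives $U\cdot\shmap(S)=\shmap(A)\times\shmap(S)$. On the other hand, part (ii) of the preceding lemma gives $U\cdot(S\otimes S)=(A\times S)\otimes S+S\otimes(A\times S)$, and since $i_\otimes$ is a ring homomorphism for the products $\times$ — immediate from \eqref{eqn:2.2.1} and Definition~\ref{def:tensor product}, giving for instance $(A\times S)\otimes S=(A\otimes\unitmould)\times(S\otimes S)$ — this becomes $U\cdot(S\otimes S)=B\times(S\otimes S)$.

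For the implication (ii)$\Rightarrow$(i), suppose $S$ is symmetral. By Proposition~\ref{prop:gp-like, Lie-like}(ii) we have $\shmap(S)=S\otimes S$, so the two identities above both compute $U\cdot(S\otimes S)$ and hence $\shmap(A)\times(S\otimes S)=B\times(S\otimes S)$. Since $S^0(\emptyset)=1$, the mould $S$ is $\times$-invertible, so the dimould $S\otimes S$ is invertible; cancelling it on the right gives $\shmap(A)=A\otimes\unitmould+\unitmould\otimes A$, and by Proposition~\ref{prop:gp-like, Lie-like}(i), $A$ is alternal. For the implication (i)$\Rightarrow$(ii), suppose $A$ is alternal, so $\shmap(A)=B$ by Proposition~\ref{prop:gp-like, Lie-like}(i). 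Then the two identities say that both $\shmap(S)$ and $S\otimes S$ are solutions $Z\in\mathcal M_2(\mathcal F)$ of $U\cdot Z=B\times Z$ with common $(0,0)$-component $S^0(\emptyset)=1$. I would then show such a $Z$ is unique, by induction on the total bidegree: at bidegree $(r,s)$ with $r+s\geq 1$, evaluated on words $\omega,\eta$ with $l(\omega)=r$, $l(\eta)=s$, the left-hand side equals $U^{r+s}\!\bigl((\omega,\eta)\bigr)\cdot Z^{r,s}(\omega;\eta)$, while in $(B\times Z)^{r,s}(\omega;\eta)$ the only term carrying $Z^{r,s}$ itself has coefficient $B^{0,0}(\emptyset;\emptyset)=0$, so the right-hand side depends only on components $Z^{r',s'}$ with $r'+s'<r+s$. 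As $(\omega,\eta)$ is a nonempty word, $U^{r+s}\!\bigl((\omega,\eta)\bigr)\neq 0$ in the integral domain $\mathcal F_{r+s}$, hence $Z^{r,s}(\omega;\eta)$ is determined by lower bidegrees; with the $(0,0)$-component fixed, induction forces $\shmap(S)=S\otimes S$, and Proposition~\ref{prop:gp-like, Lie-like}(ii) (using $S^0(\emptyset)=1$) shows $S$ is symmetral.

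I expect the uniqueness argument in (i)$\Rightarrow$(ii) to be the delicate point to write carefully, since it is exactly there that all the hypotheses enter: $A^0(\emptyset)=0$ (equivalently $B^{0,0}(\emptyset;\emptyset)=0$) to ensure no self-referential $Z^{r,s}$-term survives on the right, the nonvanishing $U^{l(\omega)}(\omega)\neq 0$ in \eqref{cond:A.1} together with $\mathcal F_m$ being an integral domain to cancel this scalar, and the additivity in \eqref{cond:A.1} to make $U\cdot(-)$ behave as a derivation, so that the preceding lemma applies at all. The remaining steps are routine manipulations with $\shmap$, $\otimes$ and the preceding lemma.
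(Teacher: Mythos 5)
Your proof is correct and follows essentially the same route as the paper: it rests on the same ingredients ($\shmap$ being an algebra homomorphism, the derivation-type properties of $U\cdot(-)$, and the dimould characterizations of alternality and symmetrality), and your uniqueness-by-induction argument for (i)$\Rightarrow$(ii) is just a repackaging of the paper's induction on $l(\omega)+l(\eta)$, which likewise exploits that the $(\emptyset;\emptyset)$-component of $A\otimes\unitmould+\unitmould\otimes A$ vanishes and that $U^{l(\omega)+l(\eta)}(\omega\eta)\neq 0$. Your (ii)$\Rightarrow$(i), cancelling the invertible dimould $S\otimes S$ on the right, is the same computation the paper performs by writing $A=S^{\times-1}\times(U\cdot S)$ and applying $\shmap$.
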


\begin{proof}
\underline{(i) $\Rightarrow$ (ii)}:
For $\omega,\eta\in X^{\bullet}\setminus\{\emptyset\}$, we prove $\mathpzc{sh}(S)^{l(\omega), l(\eta)}(\omega;\eta)=(S\otimes S)^{l(\omega), l(\eta)}(\omega;\eta)$ by induction on $l(\omega)+l(\eta)\geq2$.
It is easy to show the case of $l(\omega)=l(\eta)=1$.
For $l(\omega)+l(\eta)\geq3$, we have
\begin{align}\label{eqn:calculation of Ush(S)}
&(U\cdot\mathpzc{sh}(S))^{l(\omega), l(\eta)}(\omega;\eta) \\
&=\mathpzc{sh}(U\cdot S)^{l(\omega), l(\eta)}(\omega;\eta) \nonumber \\
&=\mathpzc{sh}(A\times S)^{l(\omega), l(\eta)}(\omega;\eta) \nonumber \\
&=(\mathpzc{sh}(A)\times \mathpzc{sh}(S))^{l(\omega), l(\eta)}(\omega;\eta) \nonumber \\
&=((A\otimes 1_{\mathcal{M}(\mathcal F)} + 1_{\mathcal{M}(\mathcal F)}\otimes A)\times \mathpzc{sh}(S))^{l(\omega), l(\eta)}(\omega;\eta) \nonumber \\
&=\sum_{\substack{\omega=\omega'\omega'' \\ \eta=\eta'\eta''}}
(A\otimes \unitmould + \unitmould\otimes A)^{l(\omega'), l(\eta')}(\omega';\eta') \mathpzc{sh}(S)^{l(\omega''), l(\eta'')}(\omega'';\eta''). \nonumber
\end{align}
Because we have $(A\otimes \unitmould + \unitmould\otimes A)^{0,0}(\emptyset;\emptyset)=0$, in the sum of the last member of \eqref{eqn:calculation of Ush(S)}, two parameters $\omega',\eta'$ run over $(\omega',\eta') \neq (\emptyset,\emptyset)$, that is, we obtain
$$
l(\omega'')+l(\eta'') = \{ l(\omega) - l(\omega') \} + \{ l(\eta) - l(\eta') \} < l(\omega)+l(\eta).
$$
So by induction hypothesis, we have
\begin{align*}
(U\cdot\mathpzc{sh}(S))^{l(\omega), l(\eta)}(\omega;\eta)
&=((A\otimes \unitmould + \unitmould\otimes A)\times (S\otimes S))^{l(\omega), l(\eta)}(\omega;\eta) \\
&=((A\times S)\otimes S + S\otimes (A\times S))^{l(\omega), l(\eta)}(\omega;\eta) \\
&=((U\cdot S)\otimes S + S\otimes (U\cdot S))^{l(\omega), l(\eta)}(\omega;\eta) \\
&=(U\cdot (S\otimes S))^{l(\omega), l(\eta)}(\omega;\eta).
\end{align*}
Because $\omega,\eta\neq\emptyset$, by the condition (\ref{cond:A.1}), we obtain $\mathpzc{sh}(S)^{l(\omega), l(\eta)}(\omega;\eta)=(S\otimes S)^{l(\omega), l(\eta)}(\omega;\eta)$, that is, $S$ is symmetral.

\bigskip
\noindent
\underline{(ii) $\Rightarrow$ (i)}:
By $S\in\GARI(\mathcal F)$, we get $A=S^{\times-1}\times (U\cdot S)$, so we have
\begin{align*}
\mathpzc{Sh}(A)
&=\mathpzc{Sh}\left( S^{\times-1}\times (U\cdot S) \right) \\
&=\mathpzc{Sh}\left( S^{\times-1} \right)\times \mathpzc{Sh}\left( U\cdot S \right) \\
&=\left( S^{\times-1}\otimes S^{\times-1} \right)\times \left\{U \cdot\mathpzc{Sh}(S)\right\} \\
&=\left( S^{\times-1}\otimes S^{\times-1} \right)\times \left\{U \cdot(S \otimes S)\right\} \\
&=\left( S^{\times-1}\otimes S^{\times-1} \right)\times \left\{(U \cdot S)\otimes S + S\otimes (U\cdot S)\right\} \\
&=\left( S^{\times-1}\times (U\cdot S)\right)\otimes  \left( S^{\times-1} \times S\right)
+\left( S^{\times-1} \times S \right)\otimes \left( S^{\times-1}\times (U\cdot S)\right) \\
&=A\otimes \unitmould + \unitmould\otimes A.
\end{align*}
Hence $A$ is alternal.
\end{proof}

\begin{defn}[{\cite[(4.2.5)]{S-ARIGARI}}]
We define the mould $\pal\in\GARI(\mathcal F_\Lau)$ by
$$
\dur\cdot \pal=\pal \times \dupal
$$
where the mould $\dur\in\ARI(\mathcal F_\Lau)$ is defined by
$$\dur^m(\vecx_m)=x_1+\cdots+x_m \text{ for }m\ge1.$$
\end{defn}

\begin{eg}
We have
\begin{align*}
\pal^1(\vecx_1)=-\frac{1}{2x_1},
\qquad \pal^2(\vecx_2)=\frac{x_1+2x_2}{12x_1x_2(x_1+x_2)},
\qquad \pal^3(\vecx_3)=-\frac{1}{24x_1x_3(x_1+x_2)},
\end{align*}
\end{eg}

\begin{prop}[{\cite[Theorem 4.3.4]{S-ARIGARI}}]
The mould $\pal$ is in $\GARI(\mathcal F_\Lau)_\as$.
\end{prop}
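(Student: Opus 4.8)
The plan is to read off the symmetrality of $\pal$ from Proposition \ref{prop:Cresson's proposition}, using the variant of its hypothesis written as $U\cdot S=S\times A$. Concretely I would set $U:=\dur$, $S:=\pal$ and $A:=\dupal$, so that the defining relation $\dur\cdot\pal=\pal\times\dupal$ is \emph{literally} an instance of that hypothesis, and then it remains only to verify that all the standing assumptions of Proposition \ref{prop:Cresson's proposition} are in force.

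First I would check that $\dur$ satisfies the conditions (\ref{cond:A.1}). The additivity $\dur^{l(\omega_1)+l(\omega_2)}(\omega_1,\omega_2)=\dur^{l(\omega_1)}(\omega_1)+\dur^{l(\omega_2)}(\omega_2)$ is immediate from $\dur^m(\vecx_m)=x_1+\cdots+x_m$, and the non-vanishing $\dur^{l(\omega)}(\omega)\neq 0$ for $\omega\neq\emptyset$ holds since $x_1+\cdots+x_m$ is a nonzero (indeed invertible) element of $\mathcal F_{\Lau,m}=\Q((x_1,\dots,x_m))$. Next, $\dupal\in\ARI(\mathcal F_\Lau)$ by definition, and it is alternal by Proposition \ref{prop:dupal is alternal mould}; also $\pal\in\GARI(\mathcal F_\Lau)$. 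Thus every hypothesis of Proposition \ref{prop:Cresson's proposition} is met, and the implication (i)$\Rightarrow$(ii) of that proposition — ``$A$ alternal implies $S$ symmetral'' — gives exactly that $\pal$ is symmetral, i.e. $\pal\in\GARI(\mathcal F_\Lau)_\as$.

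The only point requiring a word of care is the well-definedness of $\pal$ as an element of $\GARI(\mathcal F_\Lau)$: writing $\dur\cdot\pal=\pal\times\dupal$ in components and using $\dupal^0(\emptyset)=0$, the level-$m$ identity reads $(x_1+\cdots+x_m)\,\pal^m(\vecx_m)=\sum_{k=0}^{m-1}\pal^k(x_1,\dots,x_k)\dupal^{m-k}(x_{k+1},\dots,x_m)$, which, since $x_1+\cdots+x_m$ is invertible in $\mathcal F_{\Lau,m}$, determines $\pal^m$ uniquely from $\pal^0,\dots,\pal^{m-1}$; together with the normalization $\pal^0(\emptyset)=1$ this pins down $\pal$ and places it in $\GARI(\mathcal F_\Lau)$. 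Beyond this bookkeeping there is no real obstacle, since the substance of the statement has already been packaged into Propositions \ref{prop:dupal is alternal mould} and \ref{prop:Cresson's proposition}; as a sanity check one may compare the displayed values $\pal^1,\pal^2,\pal^3$ against the symmetrality relations in low length.
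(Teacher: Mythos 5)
Your proposal is correct and follows essentially the same route as the paper: the paper's proof simply invokes Proposition \ref{prop:dupal is alternal mould} together with Proposition \ref{prop:Cresson's proposition} applied to the defining relation $\dur\cdot\pal=\pal\times\dupal$, exactly as you do. Your additional verification of conditions (\ref{cond:A.1}) for $\dur$ and of the well-definedness of $\pal$ is sound bookkeeping that the paper leaves implicit.
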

\begin{proof}
By Proposition \ref{prop:dupal is alternal mould} and Proposition \ref{prop:Cresson's proposition}, we obtain the claim.
\end{proof}

\begin{defn}[{\cite[\S 5.5]{E-flex}}]
For $r\in\N$,
the map 
$$
\slang_r: \ARI(\mathcal F_\Lau)\to \ARI(\mathcal F_\Lau)
$$
is defined by
$$
\slang_r(A):=\adari(\pal) \circ \leng_r \circ \adari(\pal)^{-1 }\circ \sang(A),
$$
for $A \in\ARI(\mathcal F_\Lau)$,
where the $\Q$-linear map $\leng_r$ on $\ARI(\mathcal F)$ is defined by
$$
(\leng_r(M))^m(\vecx_m):=\delta_{m,r}M^m(\vecx_m)
$$
for $m\ge0$.
Here, the symbol $\delta_{m,r}$ is the Kronecker delta.
\end{defn}

By definition, we have
$\sang=\sum_{r\in\N} \slang_r$.

\begin{eg}
For $A \in\ARI(\mathcal F_\Lau)$, we have
\begin{align*}
&(\slang_r(A))^0(\emptyset)=0, \\
&(\slang_r(A))^1(x_1)=\frac{\delta_{r,1}}{2}\left\{ A^1(x_1)+A^1(-x_1) \right\}, \\
&(\slang_r(A))^2(x_1,x_2) \\
&=\frac{\delta_{r,2}}{2}\left[ A^2(x_1,x_2)+A^2(-x_1,-x_2)+\frac{x_1-x_2}{2x_1x_2}\{A^1(x_1+x_2)-A^1(-x_1-x_2)\} \right. \\
&\hspace{1.1cm} \left. +\frac{x_1+2x_2}{2x_2(x_1+x_2)}\{A^1(x_1)-A^1(-x_1)\} -\frac{2x_1+x_2}{2x_1(x_1+x_2)}\{A^1(x_2)-A^1(-x_2)\} \right] \\
&\quad+\frac{\delta_{r,1}}{2}\left[ \frac{x_1}{x_2(x_1+x_2)}\{A^1(x_1)+A^1(-x_1)\} -\frac{x_2}{x_1(x_1+x_2)}\{A^1(x_2)+A^1(-x_2)\} \right. \\
&\quad\hspace{6.1cm} \left. -\frac{x_1-x_2}{x_1x_2}\{A^1(x_1+x_2)+A^1(-x_1-x_2)\} \right].
\end{align*}
\end{eg}

\section{On polar solutions}\label{sec: On polar solutions}

In this section, we reinterpret Brown's polar solutions,
the elements $\psi_{2n+1}$ ($n\geq 1$) and $\psi_{-1}$
in terms of Ecalle's mould theory (cf. \S \ref{Preparation}).
In \S \ref{sec: Brown's polar solutions}, we recall Brown's construction of the polar solutions to the double shuffle relations modulo products presented in \cite{B-anatomy}.
In \S \ref{sec: Interpretations in mould theory},
we provide mould-theoretic interpretations of these polar solutions (Theorems \ref{thm 2n+1} and \ref{thm -1}).

\subsection{Brown's polar solutions}\label{sec: Brown's polar solutions}
We start with the notation in \cite[(10.1)]{B-anatomy}.
For any sets of indices $A,B\subset \{0,1,\dots,d\}$, we use the notation\footnote{This notation is introduced in \cite[(10.1)]{B-anatomy}.}
$$
x_{A,B}=\prod_{a\in A, b\in B}(x_a-x_b).
$$
If $A$ or $B$ is the empty set, then we put $x_{A,B}:=1$.

We recall the definitions of Brown's polar solutions.
\begin{defn}[{\cite[Definition 10.1]{B-anatomy}}]
For $n\geq1$, the element $\psi_{2n+1}=(\psi^{(d)}_{2n+1})_d\in \swap(\ARI(\mathcal F_\Lau))$ is defined by\footnote{It seems the signature in \cite{B-anatomy} is incorrect.}
\begin{eqnarray}
&  \psi^{(d)}_{2n+1} (x_1,\dots,x_d) =  {1 \over 2} \sum_{i=1}^d \Big({ (x_i - x_{i-1})^{2n} \over x_{\{0,\ldots, i-2\},\{i-1\}}\, x_{\{i+1,\ldots,d\},\{i\}}}  +  { x_d^{2n} \over x_{\{1,\ldots, i-1\},\{0\}}\, x_{\{i,\ldots,d-1\},\{d\}}}\Big)   \nonumber \\
  &  - {1\over 2 }    \sum_{i=1}^{d-1}\Big( { (x_1-x_d)^{2n} \over x_{\{2,\ldots, i\},\{ 1\} } \, x_{\{i+1,\ldots, d-1,0\}, \{d\} }}   + { x_{d-1}^{2n} \over x_{\{d, 1,\ldots, i-1\}, \{0\}} \, x_{\{i,\ldots, d-2\}, \{d-1\}}} \Big)
  \nonumber
\end{eqnarray}
where $x_0=0$.
\end{defn}
The following theorem is given by Brown.
\begin{thm}[{\cite[Theorem 10.2]{B-anatomy}}]
For $n\geq1$, we have $\psi_{2n+1}\in \swap (\ARI_{\underline{\al\ast\il}}(\mathcal F_\Lau))$.
\end{thm}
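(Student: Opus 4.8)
The plan is to unwind the membership into a short list of explicit properties of the mould $\psi_{2n+1}=(\psi^{(d)}_{2n+1})_d$ and check each in turn. Since $\swap$ is an involution on $\ARI(\mathcal F_\Lau)$, the assertion $\psi_{2n+1}\in\swap(\ARI_{\underline{\al\ast\il}}(\mathcal F_\Lau))$ is equivalent to the statement that $N:=\swap(\psi_{2n+1})$ satisfies: (a) $N\in\ARI(\mathcal F_\Lau)$, i.e.\ each $N^{d}$ lies in $\mathcal F_{\Lau,d}$ and $N^{0}(\emptyset)=0$; (b) $N$ is alternal; (c) $\swap(N)=\psi_{2n+1}$ is alternil; and (d) the additional push/parity constraint recorded by the underline. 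Property (a) is immediate from the defining formula: every summand of $\psi^{(d)}_{2n+1}$ is a monomial of degree $2n$ divided by a product of linear forms $x_a-x_b$, hence a Laurent series, the depth-$0$ component is empty, and the linear change of variables $\swap$ preserves both features.

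For (b) I would use the reformulation of alternality through the dimould homomorphism $\shmap$ (Lemma \ref{lem:shmap is alg. hom.} and Proposition \ref{prop:gp-like, Lie-like}(i)): $N$ is alternal if and only if $\shmap(N)=N\otimes\unitmould+\unitmould\otimes N$, i.e.\ $N^{p+q}$ evaluated on $(x_1,\dots,x_p)\shuffle(x_{p+1},\dots,x_{p+q})$ vanishes for all $p,q\ge1$. Concretely one writes out $\swap(\psi^{(p+q)}_{2n+1})$ on the shuffle and performs a partial-fraction decomposition of each summand in the variable that the shuffle "splits", so that the alternating sum over shuffles telescopes to $0$; the two families of summands in the definition of $\psi$, and the boundary terms involving $x_0=0$ and $x_d$, are handled by the same mechanism but must be bookkept separately. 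A cleaner route worth testing first in low depth is to check that $N$ obeys the telescoping recursion $N^{m}(x_1,\dots,x_m)=N^{m-1}(x_1,\dots,x_{m-1})-N^{m-1}(x_2,\dots,x_m)$ of Lemma \ref{lem:alternal mould by inductive definition}, which would yield alternality immediately.

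For (c) the argument runs in parallel, except that the alternil condition carries extra contraction terms, in which two consecutive entries $x_i,x_{i+1}$ are replaced by $x_i+x_{i+1}$ together with a simple pole in $x_i$ or $x_{i+1}$; one must show these corrections exactly absorb the remainder of the plain shuffle cancellation, which works because the poles of $\psi^{(d)}_{2n+1}$ all lie along hyperplanes of the flexion type $x_a=x_b$ rather than along general partial sums. Finally, for (d) one uses that the exponent $2n$ is even, which forces $\psi^{(d)}_{2n+1}$ to be invariant up to sign under $x_i\mapsto -x_i$; combined with the symmetry of the summation ranges, this gives the push/parity condition demanded by the underline, and is the most formal of the four steps.

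The main obstacle is the pair (b)–(c): there is no shortcut around organizing the partial-fraction decomposition so that the shuffle and stuffle cancellations become visible, and the combinatorics is heavy because of the two families of summands and the $x_0=0$, $x_d$ boundary contributions — and the cancellations only close up for the corrected sign convention (cf.\ the footnote to the definition of $\psi_{2n+1}$), so keeping signs straight is essential. An alternative that avoids the combinatorics entirely is to invoke the motivic origin of $\psi_{2n+1}$: Brown produces it as the depth-graded image of a genuine solution to the double shuffle relations, so alternality and alternil are inherited from the shuffle and stuffle relations among motivic multiple zeta values, with push-invariance coming from the depth-graded structure of the motivic Lie coalgebra.
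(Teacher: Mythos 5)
There is a genuine gap: your text is a verification plan, not a proof, and the plan breaks down exactly where the content lies. Note first that the paper itself does not prove this statement — it is quoted from Brown (\cite[Theorem 10.2]{B-anatomy}), so the only way to ``prove'' it here is either to reproduce Brown's direct verification or to derive it from the mould-theoretic identification $\psi_{2n+1}^\sharp=\sang(\sa_{2n+1})$ of Theorem \ref{thm 2n+1} together with Ecalle's properties of the singulator; you do neither. Your steps (b) and (c) — alternality of $\swap(\psi_{2n+1})$ and alternility of $\psi_{2n+1}$ — are precisely the hard part, and they are only described (``partial fractions telescope'', ``the corrections exactly absorb the remainder'') without any computation; as you yourself concede, there is no shortcut around this bookkeeping, so nothing has been established. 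Moreover, the ``cleaner route'' via the recursion of Lemma \ref{lem:alternal mould by inductive definition} cannot work: the depth-one component is the polynomial $x_1^{2n}$, so the recursion $N^2(x_1,x_2)=N^1(x_1)-N^1(x_2)$ would force a polynomial depth-two component, whereas $\psi^{(2)}_{2n+1}$ (and its swap) genuinely has poles along $x_a=x_b$ and $x_i=0$.

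Two further points are off the mark. The fallback ``invoke the motivic origin'' is not available: $\psi_{2n+1}$ is a \emph{polar} solution, a rational function with poles, not the depth-graded image of a motivic solution, which is exactly why Brown needs a separate direct argument for Theorem 10.2 rather than inheriting the symmetries from relations among motivic multiple zeta values. And your step (d) misreads the decoration: the underline in $\ARI_{\underline{\al\ast\il}}$ records that the swap symmetry (alternility) holds only up to the appropriate constant-type correction; it is not a ``push/parity constraint'' that follows formally from the evenness of the exponent $2n$. So even if (a)–(c) were carried out, the membership claimed in the statement would not yet be verified as you have set it up.
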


To explain another polar solution introduced by Brown, we review the following definition.
\begin{defn}[{\cite[Definition 10.4]{B-anatomy}}]
\begin{enumerate}
\item We call the following graph $g_n$ with vertices labelled from the set $\{i,i+1,\dots,i+n\}$ a {\it bunch of $n$ grapes}:
\begin{center}
	\begin{tikzpicture}
	\coordinate [label=right:\quad$i$] (i) at (0,0);
	\coordinate [label=below:$i+1$] (i+1) at (-2,-1);
	\coordinate [label=below:$i+2$] (i+2) at (-1,-1);
	\coordinate  (i+n-1) at (1,-1);
	\coordinate [label=below:$i+n$] (i+n) at (2,-1);
	\foreach \P  in {i,i+1,i+2,i+n-1,i+n} \fill[black] (\P) circle (0.1);
	\draw [ultra thick] (i+1)--(i)--(i+n-1);
	\draw [ultra thick] (i+2)--(i)--(i+n);
	\draw (0,-1)node[]{$\cdots$};
	\draw (-3,-0.5)node[]{$g_n=$};
	\end{tikzpicture}
\end{center}
We call the vertex $i$ the {\it stalk}, and the vertices $i+1,\dots,i+n$ the {\it grapes}.
\footnote{
In \cite{B17b}, $i, \dots, i+n$ are referred to as the vertices, but this may be a typo.
}
\item A {\it vine} is a rooted tree whose vertices have distinct labels $\{0,1,\dots,n\}$, where $0$ denotes the root vertex, obtained by grafting bunches of grapes.
Any vine $v$ is uniquely represented by a sequence $v=g_{n_1}\cdots g_{n_k}$, where the stalk of each bunch of grapes $g_{n_i}$ is grafted to the grape with the highest label of the vine $g_{n_1}\cdots g_{n_{i-1}}$.
For any vine $v=g_{n_1}\cdots g_{n_k}$, we define the {\it height} of $v$ by $h(v):=k$.
\item Denote $\mathcal V_d$ the set of all vines with $d$ grapes.\footnote{Note that the height $h(v)$ of $v\in\mathcal V_d$ satisfies $1\leq h(v)\leq d$.}
\end{enumerate}
\end{defn}

For any vine $v$, we use the notation\footnote{This notation is introduced in \cite[(10.3)]{B-anatomy}.}
$$
x_v=\prod_{(i,j)\in E(v)}(x_j-x_i)
$$
where $x_0=0$ and the product is over edges $(i,j)$ with $i<j$.

\begin{defn}[{\cite[Definition 10.7]{B-anatomy}}]
We define the element $\psi_{-1}=(\psi^{(d)}_{-1})_d\in \swap(\ARI(\mathcal F_\Lau))$ by
\begin{equation*}
\psi^{(d)}_{-1}(x_1,\dots,x_d)
=\sum_{v\in\mathcal V_d}\frac{(-1)^{h(v)+1}}{h(v)}\frac{1}{x_vx_d},
\end{equation*}
where the sum is over vines with $d$ grapes.
\end{defn}

\begin{lem}
For $d\geq1$, we have
\[
\psi_{-1}^{(d)}(x_{1},\dots,x_{d})=\frac{1}{x_{d}}\sum_{h=1}^d\frac{(-1)^{h+1}}{h}\sum_{0=i_{0}<i_{1}<\cdots<i_{h}=d}\prod_{s=0}^{h-1}\frac{1}{(x_{i_{s}+1}-x_{i_{s}})\cdots(x_{i_{s+1}}-x_{i_{s}})}
\]
where $x_0=0$.
\end{lem}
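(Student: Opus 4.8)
The plan is to rewrite the sum over vines $v \in \mathcal V_d$ appearing in the definition of $\psi_{-1}^{(d)}$ by sorting the vines according to their height $h$ and according to the combinatorial data of how the $d$ grapes are partitioned among the $h$ bunches. Fix $h$ with $1 \le h \le d$. A vine $v = g_{n_1}\cdots g_{n_h}$ of height $h$ is determined by the ordered tuple of positive integers $(n_1,\dots,n_h)$ with $n_1 + \cdots + n_h = d$, since the grafting rule (each stalk attached to the highest-labelled grape of the preceding subvine) and the canonical labelling $\{0,1,\dots,d\}$ leave no further freedom. First I would make this bijection explicit: given $(n_1,\dots,n_h)$, set $i_0 := 0$ and $i_{s} := n_1 + \cdots + n_s$ for $1 \le s \le h$, so that $0 = i_0 < i_1 < \cdots < i_h = d$, and conversely any such chain $0 = i_0 < i_1 < \cdots < i_h = d$ recovers $n_s = i_s - i_{s-1}$. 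This identifies $\{v \in \mathcal V_d : h(v) = h\}$ with the set of chains $0 = i_0 < i_1 < \cdots < i_h = d$.

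Next I would compute $x_v$ for the vine $v$ corresponding to the chain $(i_0,\dots,i_h)$. By the labelling convention, the stalk of the $s$-th bunch $g_{n_s}$ carries the label $i_{s-1}$ (the highest grape of the preceding subvine, with the root $0$ playing that role for $s=1$), and its grapes carry the labels $i_{s-1}+1, i_{s-1}+2, \dots, i_s$. The edges of $g_{n_s}$ are therefore $(i_{s-1}, j)$ for $j = i_{s-1}+1, \dots, i_s$, and each contributes the factor $x_j - x_{i_{s-1}}$ to $x_v = \prod_{(i,j)\in E(v)}(x_j - x_i)$ (note $i_{s-1} < j$ always, matching the orientation convention $i < j$, and $x_0 = 0$ is consistent). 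Hence
\[
x_v = \prod_{s=1}^{h}\ \prod_{j=i_{s-1}+1}^{i_s}(x_j - x_{i_{s-1}})
= \prod_{s=0}^{h-1}(x_{i_s+1}-x_{i_s})\cdots(x_{i_{s+1}}-x_{i_s}),
\]
after reindexing $s \mapsto s-1$ in the outer product. Substituting into $\psi_{-1}^{(d)}(x_1,\dots,x_d) = \sum_{v\in\mathcal V_d}\frac{(-1)^{h(v)+1}}{h(v)}\frac{1}{x_v x_d}$, grouping the sum first over $h$ and then over the chains of that length, and pulling the common factor $\frac{1}{x_d}$ outside, yields exactly the asserted formula.

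The main obstacle is the bookkeeping in the second step: one must pin down precisely which labels land on the stalk and on the grapes of each successive bunch under Brown's grafting recursion, and verify that ``the grape with the highest label of $g_{n_1}\cdots g_{n_{s-1}}$'' is indeed $i_{s-1} = n_1 + \cdots + n_{s-1}$. This is a routine induction on $s$ once the labelling is set up, but it is the only place where a sign or an off-by-one slip could occur; everything else is a reindexing of a finite sum. I would therefore prove by induction on the height that the subvine $g_{n_1}\cdots g_{n_s}$ uses exactly the labels $\{0,1,\dots,i_s\}$ with the grapes of its last bunch being $\{i_{s-1}+1,\dots,i_s\}$ and its unique highest grape being $i_s$, which closes the induction and completes the argument.
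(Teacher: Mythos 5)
Your proposal is correct and follows essentially the same route as the paper's proof: parametrize vines of height $h$ by compositions $(n_1,\dots,n_h)$ of $d$, pass to the chain $0=i_0<i_1<\cdots<i_h=d$, compute $x_v$ as the product $\prod_{s=0}^{h-1}(x_{i_s+1}-x_{i_s})\cdots(x_{i_{s+1}}-x_{i_s})$, and regroup the sum by height. Your extra induction pinning down the labelling of stalks and grapes is just a more careful spelling-out of a step the paper takes for granted.
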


\begin{proof}
Note that $v$ is in $\mathcal V_d$ if and only if there exist $h\in\N$ with $1\le h\le d$ and $n_1,\dots,n_h\in\N$ such that $v=g_{n_1}\cdots g_{n_h}$.
On these conditions, we put $i_0=0$, $i_s=n_1+\cdots+n_s$ ($s=1,\dots,h$), then we have
$$
x_v=\prod_{s=0}^{h-1} \left\{ (x_{i_{s}+1}-x_{i_{s}})\cdots(x_{i_{s+1}}-x_{i_{s}}) \right\}.
$$
The above $h$ means the height $h(v)$ of $v$, so we obtain the claim.
\end{proof}

The following theorem is given by Brown.
\begin{thm}[{\cite[Theorem 10.8]{B-anatomy}}]
We have $\psi_{-1}\in \swap (\ARI_{\underline{\al\ast\il}}(\mathcal F_\Lau))$.
\end{thm}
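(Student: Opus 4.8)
The assertion $\psi_{-1}\in\swap(\ARI_{\underline{\al\ast\il}}(\mathcal F_\Lau))$ unpacks as follows: setting $N:=\swap(\psi_{-1})$, so that $\psi_{-1}=\swap(N)$ since $\swap$ is an involution, we must show that $N\in\ARI(\mathcal F_\Lau)$ meets the two defining conditions of $\ARI_{\underline{\al\ast\il}}(\mathcal F_\Lau)$ --- namely that $N$ is alternal and that $\swap(N)=\psi_{-1}$ is alternil --- each only up to the length-one correction that the underline allows. The plan is to deduce both conditions from one structural fact: that $\psi_{-1}$ is, modulo the length-one correction, the image under $\swap$ of $\logari(\mathcal E)$ for a symmetral mould $\mathcal E\in\GARI(\mathcal F_\Lau)_\as$ assembled from the ``single bunch of grapes'' kernels $\frac{1}{(x_{i+1}-x_i)\cdots(x_j-x_i)}$ together with the overall factor $\frac{1}{x_d}$. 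The coefficient $\frac{(-1)^{h+1}}{h}$ and the sum over vines in the explicit formula recorded above are exactly the expansion of such a $\logari$, the height $h(v)$ counting the number of factors in the relevant flexion-twisted product; thus the first step is to package this combinatorics rigorously as the identity $N=\logari(\mathcal E)$ (up to length one) for a concrete $\mathcal E$, and to pin down which Ecalle-side mould $\mathcal E$ is --- I expect it to be $\pal$ (or a $\paj$-type relative) conjugated by a length-one factor.

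With $\mathcal E$ in hand, I would verify its symmetrality. If $\mathcal E$ is indeed a length-one conjugate of $\pal$, its symmetrality is supplied by the chain $\dupal\in\ARI(\mathcal F_\Lau)_\al$ (Proposition \ref{prop:dupal is alternal mould}) $\Longrightarrow$ $\pal\in\GARI(\mathcal F_\Lau)_\as$ furnished by Cresson's Proposition \ref{prop:Cresson's proposition}, together with the fact that $\gari$-multiplication by a symmetral length-one factor preserves $\GARI_\as$; if a direct match with $\pal$ is inconvenient, one instead proves symmetrality of $\mathcal E$ from scratch by exhibiting a mould $U$ satisfying \eqref{cond:A.1} and an alternal $A$ with $U\cdot\mathcal E=A\times\mathcal E$, and applying Proposition \ref{prop:Cresson's proposition} once more. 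Granting $\mathcal E\in\GARI(\mathcal F_\Lau)_\as$, the mould $N=\logari(\mathcal E)$ is alternal, since $\logari$ restricts to a bijection $\GARI(\mathcal F_\Lau)_\as\to\ARI(\mathcal F_\Lau)_\al$ (the standard compatibility of $\expari$ with the $\shmap$-characterisations of Proposition \ref{prop:gp-like, Lie-like}). For the second condition, $\psi_{-1}=\swap(N)=\swap(\logari(\mathcal E))$ is the ``$\il$''-logarithm of $\swap(\mathcal E)$; since $\swap(\mathcal E)$ is symmetril --- for $\mathcal E=\pal$ this is the classical property that $\swap(\pal)$ is symmetril, which is unaffected by the length-one conjugation --- the mould $\psi_{-1}$ is alternil, as required.

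I expect the main obstacle to be the low-length bookkeeping: establishing the flexion-product identity that realises $\psi_{-1}$ as a $\logari$, and then matching Brown's normalization of $\psi_{-1}$ --- for instance $\psi_{-1}^{(1)}(x_1)=x_1^{-2}$ and $\psi_{-1}^{(2)}(x_1,x_2)=x_1^{-1}x_2^{-2}-\tfrac12\,x_1^{-1}x_2^{-1}(x_2-x_1)^{-1}$ --- against Ecalle's normalization of $\pal$, so as to isolate precisely the length-one correction that the underline in $\ARI_{\underline{\al\ast\il}}$ absorbs and to check that the clean symmetral/symmetril statements and the barred alternal/alternil conditions line up. Should the structural identification prove unwieldy, the fallback is to verify the two defining families directly from the explicit vine formula --- the shuffle relations for $N=\swap(\psi_{-1})$ and the stuffle relations for $\psi_{-1}$ --- by induction on depth, reducing each relation in depth $m+n$ to depth $<m+n$ using the vine-grafting combinatorics (the same inductive pattern as in the proof of Lemma \ref{lem:alternal mould by inductive definition}); this route requires no further structural input, and there too the only delicate point is the length-one term.
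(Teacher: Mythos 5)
The paper does not actually prove this statement: it is recalled from Brown and the proof is the citation \cite[Theorem 10.8]{B-anatomy}. The only related result proved in the paper is Theorem \ref{thm -1}, which identifies $\psi_{-1}^{\#}$ with $\frac{1}{x_{1}+\cdots+x_{d}}\log(\paj)$, where the logarithm is taken with respect to the mould product $\times$. Measured against that, the pivot of your plan contains a genuine gap: the vine expansion with coefficients $\frac{(-1)^{h+1}}{h}$ and $h$-fold products of single bunches is the $\times$-logarithm of $\paj$ (this is exactly what the proof of Theorem \ref{thm -1} shows), not $\logari$ of anything. The map $\logari$ is the inverse of $\expari$, whose expansion proceeds through iterated $\preari$'s and carries flexion corrections; it does not reduce to $\sum_{h}\frac{(-1)^{h+1}}{h}(\mathcal{E}-\unitmould)^{\times h}$. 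So the key step ``$N=\logari(\mathcal{E})$ with $\mathcal{E}$ symmetral, hence $N$ alternal'' is applied to the wrong object, and the conjectured identification of $\mathcal{E}$ with $\pal$ (``I expect it to be $\pal$'') is never established.

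One half of your program can be repaired along the lines of Theorem \ref{thm -1}: since $\paj$ is symmetral, its $\times$-logarithm is alternal by Lemma \ref{lem:shmap is alg. hom.}, and the prefactor $\frac{1}{x_{1}+\cdots+x_{d}}$ does not disturb shuffle relations because it depends only on the total sum of the variables; but even here one must be careful that $\#$ is a change of coordinates, not $\swap$, so the correct dictionary between Brown's $x$-variables and the al/il sides still has to be spelled out. The other half — that $\psi_{-1}$ itself satisfies the alternility (il) condition, and the bookkeeping of exactly which length-one corrections the underline in $\ARI_{\underline{\al\ast\il}}(\mathcal F_\Lau)$ absorbs — is only asserted in your sketch: alternility involves the flexion-corrected shuffle with polar terms, which never enters your computation, and ``$\swap(\mathcal{E})$ is symmetril, unaffected by the length-one conjugation'' is not justified. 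As it stands this is a plausible programme rather than a proof; the fallback you mention (verifying both families of relations directly from the vine formula by induction on depth) is essentially a restatement of Brown's original task, which the paper deliberately leaves to \cite{B-anatomy}.
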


\subsection{Interpretations in mould theory}\label{sec: Interpretations in mould theory}
In this subsection, we give interpretations of Brown's polar solutions
$\psi_{2n+1}$ and $\psi_{-1}$
in mould theory.

\begin{thm}\label{thm 2n+1}
For $n\geq1$, we have
\[
	\psi_{2n+1}^\sharp = \sang(\sa_{2n+1}) \qquad \left(= \sum_{r=1}^{\infty} \slang_r(\sa_{2n+1}) = \sum_{r=1}^{\infty}\sa^\bullet_{\binom{2n+1}{r}} \right)
\]
with 
\begin{equation}\label{eq:sa}
\sa_{s}(u)=u^{s-1}
\end{equation}
(consult \cite[(9.12)]{E-flex} for $\sa^\bullet_s\in\ARI_{\al/\al}(\mathcal F_\ser)\cap
\BIMU_1$).
Here $\sharp$ is the map defined by
$$
f(x_1,\dots, x_r)\overset{\sharp}{\mapsto}f(x_1,x_1+x_2,\dots, x_1+\cdots+x_r).
$$
in \cite[(4.16)]{B-anatomy}.
\end{thm}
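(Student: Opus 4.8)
The plan is to verify the identity $\psi_{2n+1}^\sharp = \sang(\sa_{2n+1})$ by computing both sides through the explicit combinatorial descriptions available, reducing everything to a comparison of rational functions in $x_1,\dots,x_d$. First I would unwind the definition of $\sang$ from \S 0.3: by $\sang = \sum_{r\in\N}\slang_r$ and the definition $\slang_r(A) = \adari(\pal)\circ\leng_r\circ\adari(\pal)^{-1}\circ\sang(A)$, the right-hand side is already stratified by length; but more directly I would use the compact formula $\sang(M) = \frac12\left(\id + \nega\circ\adari(\paj)\right)\left(\paj^{\times-1}\times M\times\paj\right)$. Feeding in $M = \sa_{2n+1}$, which is concentrated in length one with $\sa_{2n+1}^1(u) = u^{2n}$, the conjugation $\paj^{\times-1}\times \sa_{2n+1}\times\paj$ and the action of $\adari(\paj)$ both expand into sums over factorizations $\vecx_d = \alpha\,u\,\beta$ with flexion-shifted arguments, exactly as in the corrected formula recorded in Remark~\ref{rem: proof for conj}. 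The key point is that because $\sa_{2n+1}$ lives in length one, only the length-one slot $w_i$ survives in every term, and $\paj$, $\mupaj$ contribute the products of linear forms $x_1(x_1+x_2)\cdots$ that, after the $\sharp$-substitution, become precisely the products $x_{\{0,\dots,i-2\},\{i-1\}}$, $x_{\{i+1,\dots,d\},\{i\}}$, etc., appearing in Brown's Definition 10.1.

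Second, I would transport Brown's formula for $\psi_{2n+1}^{(d)}$ through the map $\sharp$. Since $\sharp$ sends $f(x_1,\dots,x_r)\mapsto f(x_1, x_1+x_2,\dots,x_1+\cdots+x_r)$, the monomials $(x_i - x_{i-1})^{2n}$, $x_d^{2n}$, $(x_1-x_d)^{2n}$, $x_{d-1}^{2n}$ in Brown's numerators become $u$-powers of the flexed length-one variables, and the denominators $x_{A,B} = \prod_{a\in A,b\in B}(x_a - x_b)$ become the $\paj$/$\mupaj$-type products in the new coordinates. Thus each of the four sums in Brown's definition should match, termwise or after a small reindexing, one of the four sums in the corrected $\sang$ formula of Remark~\ref{rem: proof for conj}: the first $\sum_{i=1}^d$ with the $\mupaj^{\bm a}S^{w_i}\paj^{\bm b}$ term (the $\id$ part), and the remaining three with the $\nega\circ\adari(\paj)$ part, the two $P(|\bm u|)$-terms accounting for the $i=1$ boundary corrections and the cyclic-looking $(x_1-x_d)$ numerator. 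I would check the $\frac12$ normalization is consistent on both sides — it appears explicitly in Brown's definition and in the $\frac12(\id + \cdots)$ defining $\sang$ — and verify the sign, which the footnotes in the excerpt flag as delicate (``It seems the signature in \cite{B-anatomy} is incorrect'').

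The parenthetical equalities $\sang(\sa_{2n+1}) = \sum_{r=1}^\infty \slang_r(\sa_{2n+1}) = \sum_{r=1}^\infty \sa^\bullet_{\binom{2n+1}{r}}$ I would treat as follows: the first is immediate from $\sang = \sum_r\slang_r$; the second is essentially the content of Ecalle's \cite[(9.12)]{E-flex}, identifying the length-$r$ component of the singulated $\sa$ with Ecalle's $\sa^\bullet$ family, so I would cite it rather than reprove it, perhaps verifying it in length $\le 2$ against the explicit $\slang_r$ example in the excerpt as a sanity check. Finally, that $\psi_{2n+1}^\sharp$ indeed lands in $\ARI(\mathcal F_\Lau)$ (so that the equation typechecks) follows because $\psi_{2n+1}\in\swap(\ARI(\mathcal F_\Lau))$ by Brown's Theorem 10.2 and $\sharp$ is, up to $\swap$, the passage between Brown's and Ecalle's variable conventions as in \cite[(4.16)]{B-anatomy}.

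\textbf{Main obstacle.} The hard part will be the bookkeeping of the four-term decomposition: matching Brown's two ``$\sum_{i=1}^d$'' and two ``$\sum_{i=1}^{d-1}$'' sums against the four sums in the corrected Ecalle formula, since the correspondence is not a naive term-by-term identification but requires a reindexing of factorization positions and a careful tracking of which flexion ($\urflex{}{}$ vs.\ $\ulflex{}{}$) produces which pattern of linear factors after the $\sharp$-substitution — compounded by the sign subtleties that both the paper's own footnotes and Remark~\ref{rem: proof for conj} explicitly warn about. Getting the boundary terms (the grape at position $1$, the $x_d^{2n}$ versus $x_{d-1}^{2n}$ numerators) to align with the $P(|\bm u|)$-corrections in Ecalle's formula is where the real work lies.
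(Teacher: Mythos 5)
Your plan is essentially the paper's own proof: starting from the corrected four-term formula for $2\sang(S)$ in Remark \ref{rem: proof for conj}, plugging in the length-one mould $\sa_{2n+1}$ together with the explicit expressions for $\paj$ and $\mupaj$, performing the substitution $x_j=u_1+\cdots+u_j$ (i.e.\ $\sharp$), and matching the four sums — after exactly the small reindexing you anticipate for the $(x_1-x_d)^{2n}$ term — against the four sums in Brown's Definition 10.1, with the $\tfrac12$ and the signs tracked as you describe; the parenthetical equalities are likewise handled by citation, as in the paper. No substantive difference in approach.
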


\begin{proof}
By Remark \ref{rem: proof for conj}, we have
\begin{align*}
&2\sang(S)(u_{1},\dots,u_{d}) \\
& =+\sum_{i=1}^{d}\mupaj(u_{1},\dots,u_{i-1})S(u_{i})\paj(u_{i+1},\dots,u_{d})\\
& \quad+\sum_{i=1}^{d}\paj(u_{1},\dots,u_{i-1})S(-(u_{1}+\cdots+u_{d}))\mupaj(u_{i+1},\dots,u_{d})\\
& \quad-\sum_{i=1}^{d-1}\paj(u_{1},\dots,u_{i-1})S(-(u_{1}+\cdots+u_{d-1}))\mupaj(u_{i+1},\dots,u_{d-1})\frac{1}{u_{1}+\cdots+u_{d}}\\
 & \quad+\sum_{i=2}^{d}\paj(u_{2},\dots,u_{i-1})S(-(u_{2}+\cdots+u_{d}))\mupaj(u_{i+1},\dots,u_{d})\frac{1}{u_{1}+\cdots+u_{d}}.
\end{align*}
Note that $\paj$ and $\mupaj$ is given by
\[
\paj(u_{1},\dots,u_{k})=\frac{1}{u_{1}(u_{1}+u_{2})\cdots(u_{1}+u_{2}+\cdots+u_{k})}
\]
and
\[
\mupaj(u_{1},\dots,u_{k})=\frac{(-1)^{k}}{u_{k}(u_{k}+u_{k-1})\cdots(u_{k}+u_{k-1}+\cdots+u_{1})}.
\]
Put $x_{j}=u_{1}+\cdots+u_{j}$ for $j=0,\dots,d$.
Then,
\begin{align*}
& \sum_{i=1}^{d}\mupaj(u_{1},\dots,u_{i-1})S(u_{i})\paj(u_{i+1},\dots,u_{d})\\
 & =\sum_{i=1}^{d}\frac{S(x_{i}-x_{i-1})}{\left(\prod_{j=0}^{i-2}(x_{j}-x_{i-1})\right)\left(\prod_{j=i+1}^{d}(x_{j}-x_{i})\right)}\\
 & =\sum_{i=1}^{d}\frac{S(x_{i}-x_{i-1})}{x_{\{0,\dots,i-2\},\{i-1\}}x_{\{i+1,\dots,d\},\{i\}}},
\end{align*}
\begin{align*}
& \sum_{i=1}^{d}\paj(u_{1},\dots,u_{i-1})S(-(u_{1}+\cdots+u_{d}))\mupaj(u_{i+1},\dots,u_{d}) \\
& =\sum_{i=1}^{d}\frac{S(-x_{d})}{\left(\prod_{j=1}^{i-1}x_{j}\right)\left(\prod_{j=i}^{d-1}(x_{j}-x_{d})\right)}\\
 & =\sum_{i=1}^{d}\frac{S(-x_{d})}{x_{\{1,\dots,i-1\},\{0\}}x_{\{i,\dots,d-1\},\{d\}}},
\end{align*}
\begin{align*}
& -\sum_{i=1}^{d-1}\paj(u_{1},\dots,u_{i-1})S(-(u_{1}+\cdots+u_{d-1}))\mupaj(u_{i+1},\dots,u_{d-1})\frac{1}{u_{1}+\cdots+u_{d}}\\
& =-\sum_{i=1}^{d-1}\frac{S(-x_{d-1})}{\left(\prod_{j=1}^{i-1}x_{j}\right)\left(\prod_{j=i}^{d-2}(x_{j}-x_{d-1})\right)x_{d}}\\
 & =-\sum_{i=1}^{d-1}\frac{S(-x_{d-1})}{x_{\{d,1,\dots,i-1\},\{0\}}x_{\{i,\dots,d-2\},\{d-1\}}},
\end{align*}
\begin{align*}
& \sum_{i=2}^{d}\paj(u_{2},\dots,u_{i-1})S(-(u_{2}+\cdots+u_{d}))\mupaj(u_{i+1},\dots,u_{d})\frac{1}{u_{1}+\cdots+u_{d}} \\
& =\sum_{i=2}^{d}\frac{S(x_{1}-x_{d})}{\left(\prod_{j=2}^{i-1}(x_{j}-x_{1})\right)\left(\prod_{j=i}^{d-1}(x_{j}-x_{d})\right)x_{d}}\\
 & =-\sum_{i=2}^{d}\frac{S(x_{1}-x_{d})}{x_{\{2,\dots,i-1\},\{1\}}x_{\{i,\dots,d-1,0\},\{d\}}}\\
 & =-\sum_{i=1}^{d-1}\frac{S(x_{1}-x_{d})}{x_{\{2,\dots,i\},\{1\}}x_{\{i+1,\dots,d-1,0\},\{d\}}}.
\end{align*}
Thus,
\begin{align*}
2\sang(\sa_{2n+1})(u_{1},\dots,u_{d}) & =2\psi_{2n+1}(x_{1},\dots,x_{d})\\
 & =2\psi_{2n+1}^{\#}(u_{1},\dots,u_{d}),
\end{align*}
which implies $\sang(\sa_{2n+1}) = \psi_{2n+1}^{\#}$.
\end{proof}

\begin{thm}\label{thm -1}
We have
\[
\psi_{-1}^{\#}(x_{1},\dots,x_{d})=\frac{1}{x_{1}+\cdots+x_{d}}\log(\paj)(x_{1},\dots,x_{d}).
\]
\end{thm}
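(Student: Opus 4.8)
The plan is to reduce the statement to the explicit formula for $\psi_{-1}^{(d)}$ established in the Lemma above, perform the substitution defining $\#$, and then recognise the outcome componentwise as the logarithm of $\paj$ with respect to the associative product $\times$. Note first that this logarithm is meaningful: $\paj\in\GARI(\mathcal F_\Lau)$, since $\paj^{0}(\emptyset)=1$ (the empty product), so writing $\paj=\unitmould+\overline{\paj}$ with $\overline{\paj}^{0}(\emptyset)=0$ and $\overline{\paj}^{m}=\paj^{m}$ for $m\ge1$, one sets
\[
\log(\paj):=\sum_{h\ge1}\frac{(-1)^{h+1}}{h}\,\overline{\paj}^{\times h},
\]
which is finite in each degree because $(\overline{\paj}^{\times h})^{m}=0$ whenever $h>m$.

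First I would proceed exactly as in the proof of Theorem \ref{thm 2n+1}: put $x_{j}:=u_{1}+\cdots+u_{j}$ for $j=0,\dots,d$ (so $x_{0}=0$), so that $\psi_{-1}^{\#}(u_{1},\dots,u_{d})=\psi_{-1}^{(d)}(x_{1},\dots,x_{d})$. Since $x_{i+\ell}-x_{i}=u_{i+1}+\cdots+u_{i+\ell}$ for $0\le i<i+\ell$, the definition of $\paj$ gives
\[
(x_{i_{s}+1}-x_{i_{s}})\cdots(x_{i_{s+1}}-x_{i_{s}})=\paj(u_{i_{s}+1},\dots,u_{i_{s+1}})^{-1},
\]
while $x_{d}=u_{1}+\cdots+u_{d}$. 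Feeding these into the formula of the Lemma turns $\psi_{-1}^{\#}(u_{1},\dots,u_{d})$ into
\[
\frac{1}{u_{1}+\cdots+u_{d}}\sum_{h=1}^{d}\frac{(-1)^{h+1}}{h}\sum_{0=i_{0}<i_{1}<\cdots<i_{h}=d}\ \prod_{s=0}^{h-1}\paj(u_{i_{s}+1},\dots,u_{i_{s+1}}).
\]

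Next I would identify the inner double sum with a degree-$d$ component of an iterated $\times$-product. Unwinding the definition of $\times$ on $\mathcal M(\mathcal F_\Lau)$ and using $\overline{\paj}^{0}=0$, one checks by induction on $h$ that
\[
(\overline{\paj}^{\times h})^{d}(u_{1},\dots,u_{d})=\sum_{0=i_{0}<i_{1}<\cdots<i_{h}=d}\ \prod_{s=0}^{h-1}\paj(u_{i_{s}+1},\dots,u_{i_{s+1}}),
\]
the strict inequalities $i_{s}<i_{s+1}$ being precisely the condition that no block of variables be empty. Substituting this back, and recalling that $(\overline{\paj}^{\times h})^{d}=0$ for $h>d$, the displayed expression becomes $\frac{1}{u_{1}+\cdots+u_{d}}\log(\paj)^{d}(u_{1},\dots,u_{d})$; renaming $u$ back to $x$ gives the asserted identity.

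The computation is mostly bookkeeping once these two recognitions are in place. The point that needs care is the matching of combinatorial data: the cut points $0=i_{0}<\cdots<i_{h}=d$ recording the vine $v=g_{n_{1}}\cdots g_{n_{h}}$ in the Lemma must be matched block by block with the decompositions of a length-$d$ word that underlie the $h$-fold $\times$-product, and it is exactly the normalisation $\overline{\paj}^{0}=0$ that forces the strict inequalities and hence the degreewise truncation of the series. It is also worth stating explicitly that $\log$ here is the logarithm for $\times$ and not $\logari$, which is what makes the $\paj$-blocks concatenate as ordinary products over consecutive runs of variables.
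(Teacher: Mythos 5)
Your proposal is correct and follows essentially the same route as the paper: apply the Lemma, substitute $x_j\mapsto x_1+\cdots+x_j$, recognise each block as a value of $\paj$, identify the sum over cut points $0=i_0<\cdots<i_h=d$ with the $h$-fold $\times$-power of $\paj-\unitmould$, and sum the series to $\log(\paj)$. The extra remarks you make (that $\log$ is the $\times$-logarithm, well-defined degreewise because $(\paj-\unitmould)^{\times h}$ vanishes in degrees below $h$) are left implicit in the paper but match its argument exactly.
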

\begin{proof}
We have
\begin{align*}
\psi_{-1}^{\#}&(x_{1},\dots,x_{d})  =\psi_{-1}(x_{1},x_{1}+x_{2},\dots,x_{1}+\cdots+x_{d})\\
 & =\frac{1}{x_{1}+\cdots+x_{d}}\sum_{h=1}^d\frac{(-1)^{h+1}}{h}\sum_{0=i_{0}<i_{1}<\cdots<i_{h}=d} \\
 & \qquad\quad \prod_{s=0}^{h-1}\frac{1}{x_{i_{s}+1}(x_{i_{s}+1}+x_{i_{s}+2})\cdots(x_{i_{s}+1}+x_{i_{s}+2}+\cdots+x_{i_{s+1}})}\\
 & =\frac{1}{x_{1}+\cdots+x_{d}}\sum_{h=1}^d\frac{(-1)^{h+1}}{h}\sum_{0=i_{0}<i_{1}<\cdots<i_{h}=d}\prod_{s=0}^{h-1}\paj(x_{i_{s}+1},x_{i_{s}+2},\dots,x_{i_{s+1}})\\
 & =\frac{1}{x_{1}+\cdots+x_{d}}\sum_{h=1}^d\frac{(-1)^{h+1}}{h}(\paj-I^{\bullet})^{h}(x_{1},\dots,x_{d})\\
 & =\frac{1}{x_{1}+\cdots+x_{d}}\log(\paj)(x_{1},\dots,x_{d}).
\end{align*}
\end{proof}

\section{On polynomial solutions}\label{sec: On polynomial solutions}
We compare, up to depth three,
the polynomial solutions $\sigma_{2n+1}^c$ ($n \geq 1$) constructed by Brown \cite{B17b}
and the polynomial solution $\luma_{2n+1}^{(3)}$ constructed by Ecalle  \cite{E-flex}
to the double shuffle equations modulo products.
In particular, the discrepancy between these two families in depth three is made explicit in terms of Bernoulli numbers and certain accompanying polynomials in Theorem \ref{thm: comparison depth 3}.

\subsection{Brown's polynomial solution}
This subsection recalls the construction of Brown's polynomial solution.

Let  $s'\in\ARI(\mathcal{F}_\Lau)/\ARI_{\geq3}(\mathcal{F}_\Lau)$ defined by
\footnote{
In the published version of \cite{B17b}, the coefficient in $s'(u_{1},u_{2})$
is $\frac{1}{6}$, and in the arXiv version, the coefficients is $\frac{1}{12}$.
It does not look that the construction of $\xi$  works well if the
coefficient is $\frac{1}{6}$.
}

\[
s'(u_{1})=\frac{1}{2x_{1}}, \quad
s'(u_{1},u_{2})=\frac{1}{12}\left(\frac{1}{x_{1}x_{2}}+\frac{1}{x_{2}(x_{1}-x_{2})}\right)
\]
where we put $x_{1}=u_{1}$, $x_{2}=u_{1}+u_{2}$.
For
$S\in\BIMU_1$, define
$$\xi'(S)=S + \ari(S,s')+
\frac{1}{2}\ari(\ari(S,s'),s')\in
\ARI(\mathcal{F}_\Lau)/\ARI_{\geq 4}(\mathcal{F}_\Lau).
$$
Brown (\cite[Definition 5.1]{B17b}) introduces the element
$$
\xi_{2n+1}:=\xi'(\sa_{2n+1})\in \ARI(\mathcal{F}_\Lau)/\ARI_{\geq 4}(\mathcal{F}_\Lau)
$$
and shows that it satisfies
the double shuffle relations modulo products in depth 2 and 3 (cf. \cite[Proposition 5.2]{B17b}).

In \cite[\S 5.2]{B17b},
he constructs an element
$$\sigma_{2n+1}^c\in\ARI(\mathcal{F}_\pol)/\ARI_{\geq 4}(\mathcal{F}_\pol),$$
called the {\it canonical normalization},
which satisfies  the double shuffle relations modulo products up to depth 3.
Up to length (depth) 3,
it is given by
\begin{align}\label{eq: sigma c 2n+1}
\sigma_{2n+1}^c
&\equiv
\xi_{2n+1}+
\sum_{a+b=n}\frac{1}{24b}\frac{B_{2a}B_{2b}}{B_{2n}}\binom{2n}{2a}
\ari ({\sa_{2a+1}},\ari (\sa_{2b+1},\sa_{-1}))
\\ \notag
&\qquad\qquad\qquad\qquad\qquad\qquad\qquad\qquad\qquad\qquad
\bmod \ARI_{\geq 4}(\mathcal{F}_\Lau)
\end{align}
Its length $i$-part $(\sigma_{2n+1}^c)^{(i)}$  ($i=1,2,3$) is in
$\mathcal{F}_{\pol,i}$
and is given by
\begin{itemize}
\item
$(\sigma_{2n+1}^c)^{(1)}=\xi_{2n+1}^{(1)}=\sa_{2n+1}$,
\item
$(\sigma_{2n+1}^c)^{(2)}=\xi_{2n+1}^{(2)}$,
\item
$(\sigma_{2n+1}^c)^{(3)}=
\xi_{2n+1}^{(3)}+
\sum_{a+b=n}\frac{1}{24b}\frac{B_{2a}B_{2b}}{B_{2n}}\binom{2n}{2a}
\ari ({\sa_{2a+1}},\ari (\sa_{2b+1},\sa_{-1}))^{(3)}.
$
\end{itemize}

\subsection{Ecalle's polynomial solution}
This subsection recalls the construction of Ecalle's polynomial solution $\luma_{2n+1}^{(3)}$.
In \cite[\S 6.3 and \S 6.7]{E-flex}, Ecalle sketches a construction of
the element
$$\luma_{2n+1} \in\ARI(\mathcal{F}_\pol)$$ 
which satisfies  the double shuffle relations modulo products.
Up to length (depth) 3,
it is given by
\begin{align}\label{eq: luma 2n+1}
\luma_{2n+1}& \equiv \slang_1(\sa_{2n+1})
-\frac{1}{12}\sum_{a+b=n}
\frac{B_{2a}B_{2b}}{B_{2n}}\binom{2n}{2a}\ari(\slang_1(\sa_{2a+1}),\slang_2(\sa_{2b}))
\\ \notag
&\qquad\qquad\qquad\qquad\qquad\qquad\qquad\qquad\qquad\qquad\qquad\qquad
\bmod \ARI_{\geq 4}(\mathcal{F}_\Lau).
\end{align}
Its length $i$-part  $\luma^{(i)}_{2n+1}$ ($i=1,2,3$) is in
$\mathcal{F}_{\pol,i}$
and is given by
\begin{itemize}
\item  $\luma^{(1)}_{2n+1}=\slang_1(\sa_{2n+1})^{(1)}=(\sigma_{2n+1}^c)^{(1)}=\xi_{2n+1}^{(1)}$,
\item $\luma^{(2)}_{2n+1}=\slang_1(\sa_{2n+1})^{(2)}=(\sigma_{2n+1}^c)^{(2)}=\xi_{2n+1}^{(2)}$,
\item $\luma^{(3)}_{2n+1}
=\slang_1(\sa_{2n+1})^{(3)}
-\frac{1}{12}\sum_{a+b=n}
\frac{B_{2a}B_{2b}}{B_{2n}}\binom{2n}{2a}\ari(\slang_1(\sa_{2a+1}),\slang_2(\sa_{2b}))^{(3)}
$.
\end{itemize}

\subsection{Comparison}
In this subsection, a detailed comparison between Brown's element $\sigma_{2n+1}^c$ and Ecalle's element $\luma_{2n+1}$ is carried out, and their discrepancy in depth three is explicitly described in terms of Bernoulli numbers and certain polynomials.

\begin{thm}\label{thm: comparison depth 3}
(1). The first term of $\sigma_{2n+1}^c$ in \eqref{eq: sigma c 2n+1}
and the one of $\luma_{2n+1}$ in \eqref{eq: luma 2n+1}
coincides modulo $ARI_{\geq 4}(\mathcal{F}_\Lau)$, that is,
\[
\xi_{2n+1}\equiv \slang_{1}(\sa_{2n+1}) \ \bmod \ARI_{\geq 4}(\mathcal{F}_\Lau).
\]
Thus for $n\geq 1$, we have
$$
\sigma_{2n+1}^c-\luma_{2n+1}=\sum_{a+b=n}\
\frac{B_{2a}\cdot B_{2b}}{24 b \cdot B_{2n}}\binom{2n}{2a} D_{a,b}
\ \bmod \ARI_{\geq 4}(\mathcal{F}_\Lau)
$$
where
$$
D_{a,b}:=\ari \bigl({\sa_{2a+1}},\ari (\sa_{2b+1},\sa_{-1})\bigr)+2b\ari\bigl(\slang_1(\sa_{2a+1}),\slang_2(\sa_{2b})\bigr)
\in\ARI_{\geq 3}(\mathcal{F}_\Lau). 
$$

(2).  The length (depth)-three component of each summand
$D_{a,b}^{(3)}$ belongs to $\mathcal{F}_{\pol,3}=\Q[x_1,x_2,x_3]$.
\end{thm}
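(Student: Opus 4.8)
The plan is to show that although each of the two terms constituting $D_{a,b}^{(3)}$ is a priori only a Laurent series in $\mathcal{F}_{\Lau,3}$, their sum has no poles, hence lies in $\Q[x_1,x_2,x_3]$. The strategy is to track the denominators through each $\ari$-bracket explicitly using the depth-$3$ formula for $\ari$ recorded in the Example following Definition of $\ari$, and then to verify cancellation. First I would reduce the first summand: by \eqref{eq:sa} we have $\sa_{2a+1}(u)=u^{2a}$ and $\sa_{-1}(u)=u^{-2}$, so the inner bracket $\ari(\sa_{2b+1},\sa_{-1})$ has, by the depth-$2$ formula, value $(x_1+x_2)^{2b}\{x_1^{-2}-x_2^{-2}\}-(x_1+x_2)^{-2}\{x_1^{2b}-x_2^{2b}\}$; one checks directly that the only possible pole is along $x_1x_2(x_1+x_2)=0$, and in fact the numerator vanishes there, so $\ari(\sa_{2b+1},\sa_{-1})$ is already a polynomial. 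Consequently $\ari(\sa_{2a+1},\ari(\sa_{2b+1},\sa_{-1}))$ is a polynomial in $\Q[x_1,x_2,x_3]$ — this term contributes nothing polar.

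It therefore remains to show that $\ari(\slang_1(\sa_{2a+1}),\slang_2(\sa_{2b}))^{(3)}$ is itself polynomial. Here I would use the explicit formulas for $\slang_r$ from the Example following the Definition of $\slang_r$: $\slang_1(\sa_{2a+1})^{(1)}(x_1)=\tfrac12\{x_1^{2a}+(-x_1)^{2a}\}=x_1^{2a}$ (polynomial, using $2a$ even), and $\slang_2(\sa_{2b})^{(2)}(x_1,x_2)$ is the length-$2$ expression with $A^1(u)=u^{2b-1}$ and $A^2=0$. In that expression, using that $2b-1$ is odd so $A^1(u)+A^1(-u)=0$ while $A^1(u)-A^1(-u)=2u^{2b-1}$, the $\delta_{r,1}$-block drops out entirely and one is left with $\slang_2(\sa_{2b})^{(2)}(x_1,x_2)=\tfrac12\big[\tfrac{x_1-x_2}{2x_1x_2}\cdot 2(x_1+x_2)^{2b-1}+\tfrac{x_1+2x_2}{2x_2(x_1+x_2)}\cdot 2x_1^{2b-1}-\tfrac{2x_1+x_2}{2x_1(x_1+x_2)}\cdot 2x_2^{2b-1}\big]$. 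One must then verify that all the poles along $x_1=0$, $x_2=0$, $x_1+x_2=0$ in this combination cancel; this is a finite residue check at each hyperplane, using $b\ge1$. Granting that, both $\slang_1(\sa_{2a+1})$ and $\slang_2(\sa_{2b})$ are polynomial moulds, and feeding two polynomial moulds into the depth-$3$ $\ari$-formula (which involves only substitutions $x_i\mapsto x_i+x_j$ and differences, no division) produces a polynomial. Hence $D_{a,b}^{(3)}\in\Q[x_1,x_2,x_3]$.

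The main obstacle I anticipate is the pole cancellation in $\slang_2(\sa_{2b})^{(2)}$: a naive term-by-term look shows apparent simple poles along each of $x_1=0$, $x_2=0$, and $x_1+x_2=0$, and one must organize the three summands so that the residues visibly sum to zero — most cleanly by computing $\lim_{x_1\to 0} x_1\cdot\slang_2(\sa_{2b})^{(2)}$ and the analogous limits, and checking each equals $0$. An alternative, perhaps cleaner, route avoiding brute residue computation is conceptual: $\slang_r$ factors through $\sang$, and by construction (Definition of $\slang_r$ via $\adari(\pal)\circ\leng_r\circ\adari(\pal)^{-1}\circ\sang$) the singulator is designed precisely so that $\sang$ of a polynomial-restriction-of-series mould like $\sa_s$ lands in the polar part in a controlled way; invoking the structural statement behind Ecalle's singulator (that $\slang_r(\sa_s)$ lies in $\mathrm{BIMU}_r$ with a prescribed pole structure, \cite[\S 5.5]{E-flex}) would give polynomiality of the bracket more directly. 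I would present the explicit computation as the primary argument and remark on the structural interpretation.
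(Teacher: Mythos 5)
Your argument for part (2) breaks at its very first step: $\ari(\sa_{2b+1},\sa_{-1})$ is \emph{not} polynomial in depth two. (Your displayed expression also omits the terms $\sa_{2b+1}\times\sa_{-1}-\sa_{-1}\times\sa_{2b+1}$, which are part of $\ari=\preari(M,N)-\preari(N,M)$, but the conclusion fails either way.) With the full formula one gets
\[
\bigl(\ari(\sa_{2b+1},\sa_{-1})\bigr)^{2}(x_1,x_2)
=\frac{(x_1+x_2)^{2b}-x_2^{2b}}{x_1^{2}}-\frac{(x_1+x_2)^{2b}-x_1^{2b}}{x_2^{2}}-\frac{x_1^{2b}-x_2^{2b}}{(x_1+x_2)^{2}},
\]
and each numerator vanishes only to first order on the corresponding hyperplane while the pole is of order two, so simple poles survive (e.g.\ residue $2b\,x_2^{2b-1}$ along $x_1=0$). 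The same problem kills your second step: $\slang_2(\sa_{2b})^{(2)}$ is not polynomial either, since $\lim_{x_1\to 0}x_1\,\slang_2(\sa_{2b})^{(2)}(x_1,x_2)=-x_2^{2b-1}\neq 0$ for $b\ge 1$, so the residue check you propose cannot succeed. Thus neither summand of $D_{a,b}$ is separately polynomial, and the route ``each piece is polynomial, hence the bracket is polynomial'' is not available.

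The missing idea --- and the reason the relative factor $2b$ appears in the definition of $D_{a,b}$ --- is that the simple poles of the two pieces cancel \emph{against each other}: one has
$\ari(\sa_{2b+1},\sa_{-1})^{(2)}\in 2b\frac{x_2^{2b-1}}{x_1}-2b\frac{x_1^{2b-1}}{x_2}+2b\frac{x_2^{2b-1}}{x_1+x_2}+\Q[x_1,x_2]$
and
$\slang_2(\sa_{2b})^{(2)}\in-\frac{x_2^{2b-1}}{x_1}+\frac{x_1^{2b-1}}{x_2}-\frac{x_2^{2b-1}}{x_1+x_2}+\Q[x_1,x_2]$,
whence $\ari(\sa_{2b+1},\sa_{-1})+2b\,\slang_2(\sa_{2b})\in\ARI(\mathcal{F}_\pol)+\ARI_{\geq3}(\mathcal{F}_\Lau)$. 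One then decomposes
$D_{a,b}=\ari\bigl(\sa_{2a+1},\,\ari(\sa_{2b+1},\sa_{-1})+2b\,\slang_2(\sa_{2b})\bigr)+\ari\bigl(\slang_1(\sa_{2a+1})-\sa_{2a+1},\,2b\,\slang_2(\sa_{2b})\bigr)$,
and concludes using $\ari(\sa_{2a+1},\ARI(\mathcal{F}_\pol))\subset\ARI(\mathcal{F}_\pol)$, $\ari(\sa_{2a+1},\ARI_{\geq3}(\mathcal{F}_\Lau))\subset\ARI_{\geq4}(\mathcal{F}_\Lau)$, and the fact that both entries of the second bracket lie in $\ARI_{\geq2}(\mathcal{F}_\Lau)$, so that bracket is in $\ARI_{\geq4}(\mathcal{F}_\Lau)$. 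Finally, note that your proposal does not address part (1) at all; in the paper the congruence $\xi_{2n+1}\equiv\slang_1(\sa_{2n+1})$ is reduced to the identity $\xi'(S)\equiv\slang_1(S)$ for even $S$ and checked by a direct (computer) calculation, and some argument of this kind is still required.
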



\begin{proof}

The first claim is equivalent to the identity
\[
\xi'(S)\equiv\slang_{1}(S) \ \bmod \ARI_{\geq 4}(\mathcal{F}_\Lau)
\]
for $S\in\mathcal{M}_{1}(\mathcal{F}_\pol)$ with $S(x)=S(-x)$.
This identity can be verified by a direct computer calculation.
See \url{https://github.com/MinoruHirose/MouldTheory} for the code used in this verification.

Next, let us show the second claim. Since
\[
\ari(\sa_{2b+1},\sa_{-1})=\arit(\sa_{-1})(\sa_{2b+1})-\arit(\sa_{2b+1})(\sa_{-1})+\sa_{2b+1}\times\sa_{-1}-\sa_{-1}\times\sa_{2b+1},
\]
we have
\begin{align*}
\left(\ari(\sa_{2b+1},\sa_{-1})\right)^{2}(x_{1},x_{2}) & =(x_{1}+x_{2})^{2b}\left(\frac{1}{x_{1}^{2}}-\frac{1}{x_{2}^{2}}\right)-\frac{1}{(x_{1}+x_{2})^{2}}\left(x_{1}^{2b}-x_{2}^{2b}\right)+\frac{x_{1}^{2b}}{x_{2}^{2}}-\frac{x_{2}^{2b}}{x_{1}^{2}}\\
 & =\frac{(x_{1}+x_{2})^{2b}-x_{2}^{2b}}{x_{1}^{2}}-\frac{(x_{1}+x_{2})^{2b}-x_{1}^{2b}}{x_{2}^{2}}-\frac{x_{1}^{2b}-x_{2}^{2b}}{(x_{1}+x_{2})^{2}}\\
 & \in2b\frac{x_{2}^{2b-1}}{x_{1}}-2b\frac{x_{1}^{2b-1}}{x_{2}}+2b\frac{x_{2}^{2b-1}}{x_{1}+x_{2}}+\mathbb{Q}[x_{1},x_{2}]
\end{align*}
On the other hand,
\begin{align*}
\left(\slang_{2}(\sa_{2b})\right)^{2}(x_{1},x_{2}) & =\frac{x_{1}-x_{2}}{2x_{1}x_{2}}(x_{1}+x_{2})^{2b-1}+\frac{x_{1}+2x_{2}}{2x_{2}(x_{1}+x_{2})}x_{1}^{2b-1}-\frac{2x_{1}+x_{2}}{2x_{1}(x_{1}+x_{2})}x_{2}^{2b-1}\\
 & =\frac{(x_{1}+x_{2})^{2b-1}+x_{1}^{2b-1}}{2x_{2}}-\frac{(x_{1}+x_{2})^{2b-1}+x_{2}^{2b-1}}{2x_{1}}+\frac{x_{1}^{2b-1}-x_{2}^{2b-1}}{2(x_{1}+x_{2})}\\
 & \in -\frac{x_{2}^{2b-1}}{x_{1}}+\frac{x_{1}^{2b-1}}{x_{2}}-\frac{x_{2}^{2b-1}}{x_{1}+x_{2}}+\mathbb{Q}[x_{1},x_{2}].
\end{align*}
Thus,
\[
\ari(\sa_{2b+1},\sa_{-1})+2b\slang_{2}(\sa_{2b})\in\ARI(\mathcal{F}_{\pol}) + \ARI_{\geq3}(\mathcal{F}_{\Lau}).
\]
Therefore
\begin{align*}
D_{a,b} & =\ari\bigl(\sa_{2a+1},\ari(\sa_{2b+1},\sa_{-1})+2b\slang_{2}(\sa_{2b}\bigr)\\
 & \quad+\ari\bigl(\slang_{1}(\sa_{2a+1})-\sa_{2a+1},2b\slang_{2}(\sa_{2b})\bigr)\\
 & \in\ARI(\mathcal{F}_{\pol})+\ARI_{\geq4}(\mathcal{F}_{\Lau})
\end{align*}
since
\[
\ari(\sa_{2a+1}, \ARI(\mathcal{F}_{\pol})) \subset \ARI(\mathcal{F}_{\pol}),
\]
\[
\ari(\sa_{2a+1}, \ARI_{\geq3}(\mathcal{F}_{\Lau})) \subset \ARI_{\geq4}(\mathcal{F}_{\Lau})
\]
\[
\slang_{1}(\sa_{2a+1})-\sa_{2a+1},\ \slang_{2}(\sa_{2b}) \in \ARI_{\geq2}(\mathcal{F}_{\Lau}),
\]
and
\[
\ari\bigl(\ARI_{\geq2}(\mathcal{F}_{\Lau}), \ARI_{\geq2}(\mathcal{F}_{\Lau})\bigr) \subset \ARI_{\geq4}(\mathcal{F}_{\Lau}).
\]
Hence the theorem is proved.
\end{proof}


{\it Acknowledgments.}
H.F. has been supported by grants JSPS KAKENHI JP24K00520 and JP24K21510.
M.H. has been supported by grants JSPS KAKENHI JP22K03244.
N.K. has been supported by grants JSPS KAKENHI JP23KJ1420.


\end{document}